\newtheorem{theorem}{Theorem}[section]
\newtheorem{thm}[theorem]{Theorem}
\newtheorem{lem}[theorem]{Lemma}
\numberwithin{equation}{section}
\def\be{\begin{equation}}
\def\ee{\end{equation}}
\def\bes{\begin{equation*}}
\def\ees{\end{equation*}}
\renewcommand{\P}{\mathbb{P}}
\newcommand{\E}{\mathbb{E}}
\newcommand{\Rd}{\mathbb{R}^d}
\newcommand{\Rp}{\mathbb{R}^+}
\newcommand{\barr}{\begin{array}{rcl}}
\newcommand{\earr}{\end{array}}
\newcommand{\disp}{\displaystyle}
\newcommand{\Ema}{\E^{m_a}}
\newcommand{\Pma}{\P^{m_a}}
\newcommand{\ind}{1{\hskip -2.5 pt}\hbox{\textnormal I}}
\begin{document}

\title{An Extension of a Boundedness Result for Singular Integral Operators}


\author{Deniz Karl\i}
\affil{Department of Mathematics\\ AMF233, I\c{s}\i k University\\
 34980 \c{S}ile, Istanbul, Turkey\\
		{deniz.karli@isikun.edu.tr} \& deniz.karli@gmail.com}



\date{}

\maketitle

\maketitle

\renewcommand{\thefootnote}{}
\footnote{This research project is supported by the BAP grant numbered 14B103 at the I\c{s}\i k University, Istanbul, Turkey.}

\section{Introduction and Preliminaries}

Boundedness of singular integral operators has been studied for a long time. There are some well-known results  which were proved first by using classic analytical techniques. In these techniques, there are some important operators providing intermediate steps for the proof. Three often used operators are Lusin Area functional ($A_f$), non-tangential maximal function ($N_\alpha^f$) and G-star functional ($G^*_{f}$). They played an important role in the development of Harmonic Analysis. (See Stein \cite{stein} and \cite{stein3}.)

 With the introduction of probabilistic techniques, alternative proofs have come to the surface in addition to these analytical tools. In these classical techniques, Brownian motion plays a central role. One such approach is to consider a $(d+1)$-dimensional Brownian motion on the upper half-space and provide a probabilistic definition of harmonic functions in terms of martingales. By means of martingales, one can define Littlewood-Paley functions and hence provide probabilistic proofs of boundedness of some operators. (See, for example, Varopoulos \cite{varopoulos}, Burkholder and Gundy \cite{BG}, Burkholder, Gundy and Silverstein \cite{BGS}, Durrett \cite{durrett} and Bass \cite{bass_prob_tech}. For a more detailed literature study on square functions and these operators, see Ba$\tilde{\mbox{n}}$uelos and Davis \cite{banuelos2}.)

In the paper \cite{karli}, we studied a more general process in $(d+1)$-dimensional half-space $\Rd\times \Rp$. We would like to obtain generalisations of some theorems using the power of probabilistic techniques and the weaker conditions imposed by the process which we start with. This paper can be considered as a continuation of the discussion which originates from \cite{karli}.

The main results of this paper include (i.) boundedness of two important operators, namely the Area functional and the $G^*$ functional, and (ii.) an extension of a classical multiplier theorem on singular integrals. This classical version of the multiplier theorem, which we will discuss here, focuses on singular integrals with kernels $\kappa:\Rd\rightarrow \mathbb{R}$ satisfying the cancelation property
\begin{align}\label{cancelation} 
	\int_{r<|x|<R} \kappa(x)dx=0, \qquad \mbox{for all}\quad 0<r<R.
\end{align}
Together with a smoothing condition and some control on its tail, it is known that the corresponding convolution operator is bounded. The classical version is stated as follows. (The proof of the case $d=1$ is given in  \cite[Theorem 5.3, P.270]{bass_prob_tech}. For $d>1$, the same argument applies easily with a slight modification. See also \cite[Theorem 1.1]{bass_sing_int}.)
\begin{thm}\label{classic_case}
 Suppose $\kappa$ is the kernel of a convolution operator $T$. If $\kappa\in\mathcal{C}^1$, it satisfies the cancelation condition (\ref{cancelation}), 
 \begin{align}\label{condition}
 	|\kappa(x)|\leq c |x|^{-d} \mbox{ and } |\nabla \kappa(x)|\leq c |x|^{-d-1} ,\quad x\not= 0
\end{align}
 then for any $1<p<\infty$ there is a finite constant $c_p$ depending only on $p$ such that
 	$$\|T\|_{L^p(\Rd)\rightarrow L^p(\Rd)} <c_p.$$
\end{thm}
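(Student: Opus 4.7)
The plan is to follow the standard Calderón--Zygmund template: first establish $L^2$ boundedness via Plancherel, then derive a weak-type $(1,1)$ estimate from a Calderón--Zygmund decomposition, and finally combine these through Marcinkiewicz interpolation and duality to cover the full range $1<p<\infty$. A martingale-transform proof would also be natural given the paper's probabilistic setting, but the Fourier-analytic route is cleaner for a self-contained sketch.

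For the $L^2$ step, I would truncate $\kappa$ to $\kappa_{r,R}(x)=\kappa(x)\mathbf{1}_{r<|x|<R}$ and bound $\|\widehat{\kappa_{r,R}}\|_\infty$ uniformly in $r,R$. Fixing $\xi\neq 0$, I split the $x$-integral at scale $|x|\sim |\xi|^{-1}$. On the inner region, the cancellation condition \eqref{cancelation} allows replacing $e^{-i\xi\cdot x}$ by $e^{-i\xi\cdot x}-1$, and the elementary bound $|e^{-i\xi\cdot x}-1|\leq |\xi||x|$ combined with the size estimate in (\ref{condition}) produces a finite contribution of order $1$. On the outer region, one integration by parts transfers the derivative onto $\kappa$; the gradient estimate $|\nabla\kappa(x)|\leq c|x|^{-d-1}$ in (\ref{condition}), together with the factor $1/|\xi|$ produced by the integration by parts, again yields a bounded contribution after controlling the boundary terms. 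Letting $r\downarrow 0$, $R\uparrow\infty$ and invoking Plancherel yields $\|Tf\|_2\leq C\|f\|_2$.

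For the weak-$(1,1)$ step, the gradient bound in (\ref{condition}) implies the H\"ormander regularity $\int_{|x|>2|y|}|\kappa(x-y)-\kappa(x)|\,dx\leq C$, uniform in $y$. Given $f\in L^1$ and $\lambda>0$, perform the Calderón--Zygmund decomposition $f=g+\sum_j b_j$ at height $\lambda$, with each $b_j$ supported on a dyadic cube $Q_j$, mean zero, and $\|g\|_\infty\leq c\lambda$, $\sum_j |Q_j|\leq c\lambda^{-1}\|f\|_1$. Control $Tg$ via the $L^2$ bound just established, and exploit the mean-zero property of $b_j$ together with the H\"ormander condition to estimate $Tb_j$ outside the dilated cube $2Q_j$. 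Chebyshev's inequality then gives $|\{|Tf|>\lambda\}|\leq C\|f\|_1/\lambda$. Marcinkiewicz interpolation between strong-$(2,2)$ and weak-$(1,1)$ covers $1<p\leq 2$, and since the adjoint $T^*$ has kernel $\overline{\kappa(-\cdot)}$, which satisfies exactly the same hypotheses, duality transfers the bound to $2\leq p<\infty$.

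The main obstacle is the $L^2$ step: since $\kappa$ satisfies only $|\kappa(x)|\leq c|x|^{-d}$, it is not integrable and $\widehat{\kappa}$ exists only as a distribution. The cancellation condition must be used quantitatively on the small-$|x|$ piece to supply the extra factor $|\xi||x|$ that renders the integral convergent, and the gradient bound must be exploited via a careful integration by parts on the large-$|x|$ piece; any looser use fails to produce estimates uniform in the truncation parameters $r,R$. Once $L^2$ boundedness is in hand, the remaining steps are largely mechanical applications of the Calderón--Zygmund machinery.
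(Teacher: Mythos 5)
Your proposal is a correct proof, but it takes a genuinely different route from the one the paper invokes. The paper does not supply its own proof of Theorem~\ref{classic_case}; it cites Bass \cite[Theorem 5.3, p.~270]{bass_prob_tech} and \cite{bass_sing_int}, where the argument is probabilistic: boundedness of $T$ is reduced, via the two-sided Littlewood--Paley estimate $\|f\|_p \asymp \|G^{\uparrow}_f\|_p$ and the pointwise domination $G^{\uparrow}_{Tf}\lesssim G^{*}_{\lambda,f}$, to $L^p$-boundedness of the $G^*$ functional, which is proved with Brownian-motion martingale inequalities (Burkholder--Davis--Gundy, Doob). This is precisely the template that Theorem~\ref{mult_1} of the present paper reproduces in the $\alpha$-stable setting, so the cited proof is the natural backdrop for the rest of the paper. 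Your argument instead runs the classical Fourier-analytic Calder\'on--Zygmund machinery: truncated Fourier transform estimated by splitting at scale $|x|\sim|\xi|^{-1}$ (cancellation on the inner piece, integration by parts plus the gradient bound on the outer piece), giving $L^2$ boundedness; then the gradient bound yields the H\"ormander condition, the Calder\'on--Zygmund decomposition gives weak-$(1,1)$, and Marcinkiewicz interpolation plus duality finish. Both proofs are standard and correct; yours is more elementary and self-contained, while Bass's probabilistic proof is more in keeping with the paper's methods and explains why the authors can extend it to $\alpha\in(1,2)$ by replacing Brownian motion with the product process $X_s=(Y_s,Z_s)$ and the Poisson semigroup with $Q_t$ --- a deformation that has no obvious counterpart in the Fourier-analytic route, where the size and smoothness exponents in (\ref{condition}) are tied rigidly to the Euclidean scaling. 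One small point worth keeping straight in a fully written-out version of your $L^2$ step: after integrating by parts on the outer region $|\xi|^{-1}<|x|<R$, the boundary term at $|x|=|\xi|^{-1}$ contributes $O(1)$ and the one at $|x|=R$ is $O((|\xi|R)^{-1})$, so the estimate is indeed uniform in $r,R$ once you choose the differentiation direction $j$ with $|\xi_j|\gtrsim|\xi|$.
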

Our goal is to weaken the condition (\ref{condition}) by replacing $d$ in the exponent with $d-1+\alpha/2$ for some $\alpha\in (1,2)$ when $|x|>1$ (Theorem \ref{main_thm}). We note that for $\alpha=2$, we obtain the condition (\ref{condition}).

First we introduce our notation and some preliminary results in this section. Throughout the paper $c$ will denote a positive constant. Its value may change from line to line. 

We consider a $d$-dimensional right continuous rotationally symmetric $\alpha$-stable process $\left(Y_t\right)_{t\geq 0}$ for $\alpha\in (0,2)$, that is, $\left(Y_t\right)_{t\geq 0}$ is a right continuous Markov process with independent and stationary increments whose characteristic function is $\E(e^{i\xi Y_s})=e^{-s|\xi|^\alpha}, \xi\in\Rd, s>0$. By $p(s,x,y)$, we will denote its  (symmetric) transition density such that 
$$\P^x(Y_s\in A)=\int_A p(s,x,y)\,dy,$$
and by $P_s$ we will denote the corresponding semi-group $P_s(f)(x)=\E^x(f(Y_s))$. Here $\P^x$ is the probability measure for the process started at $x\in \Rd$, and $\E^x$ is the expectation taken with respect to $\P^x$. The transition density $p(s,x,0)$ satisfies the scaling property 

\begin{align}\label{scaling} p(s,x,0)=s^{-d/\alpha}p(1,x/s^{1/\alpha},0), \qquad x\in\Rd,\, s>0.\end{align} 

Similarly, we denote  a one-dimensional Brownian motion (independent from $Y_s$) by $Z_s$ and the probability measure for the process started at $t>0$ by $\P^t$. The process of interest is the product $X_s=(Y_s,Z_s)$ started at $(x,t)\in\Rd\times\Rp$, the corresponding probability measure and the expectation are $\P^{(x,t)}$ and $\E^{(x,t)}$, respectively. Define the stopping time $T_0=\inf\{s\geq 0: Z_s=0\}$ which is the first time $X_t$ hits the boundary of $\Rd\times\Rp$. It is clear that $T_0$ and the process $Y$ are independent since $T_0$ is expressed in terms of $Z$ only.

To provide a connection between probabilistic and deterministic integrals, we will use two tools; a new measure $\Pma$ and the vertical Green function.  Denoting the Lebesgue measure on $\Rd$ by $m(\cdot)$, we define the measure $\Pma$ by
$$\Pma=\int_{\Rd} \P^{(x,a)} m(dx),\qquad a>0.$$
Let $\Ema$ denote the expectation with respect to this measure. We note that the law of $X_{T_0}$ under this measure is $m(\cdot)$. Moreover, the semi-group $P_t$ is invariant under the Lebesgue measure, that is,
\begin{align}\label{invariance}
	\int_{\Rd} P_tf(x)m(dx)=\int_{\Rd} f(x)m(dx).
\end{align}
This follows from the symmetry of the kernel and the conservativeness of $Y$.

Second, for a positive Borel function $f$, the vertical Green function, which is the Green function for one-dimensional Brownian motion, is given by
\begin{align}\label{greens_function}
	\E^a\left[\int_0^{T_0}f(Z_s)\, ds\right]=\int_0^\infty (s\wedge a)f(s)ds.
\end{align}

 Harmonic functions play a key role in showing boundedness of Littlewood-Paley operators. Here we adapt the probabilistic interpretation of  a harmonic function (with respect to the process $X$). A continuous function $u:\Rd\times\Rp\rightarrow \mathbb{R}$ is said to be harmonic (or $\alpha$-harmonic) if $u(X_{s\wedge T_0})$ is a martingale with respect to the filtration $\mathcal{F}_s=\sigma(X_{r\wedge T_0}: r\leq s)$ and the probability measure $\P^{(x,t)}$ for any starting point $(x,t)\in\Rd\times\Rp$. One way to obtain such a harmonic function is to start with a bounded Borel function $f:\Rd\rightarrow\mathbb{R}$ and define its extension $u$ by
\begin{align*}
	u(x,t):=\E^{(x,t)}f(Y_{T_0})=\int_0^\infty \E^xf(Y_s)\P^t(T_0\in ds),
\end{align*}
where $\P^t(T_0\in ds)$ is the exit distribution of one-dimensional Brownian motion from $(0,\infty)$ which is given by
\begin{align*}
	\mu_t(ds):=\P^t(T_0\in ds)=\frac{t}{2\sqrt{\pi}}e^{-t^2/4s}s^{-3/2}ds
\end{align*}
(see \cite{meyer_long}). By a slight abuse of notation, we will denote both the function on $\Rd$ and its extension to the upper-half space by the same letter, that is, $f_t(x):=f(x,t)=\E^{(x,t)}f(Y_{T_0})$. Next, we define the semi-group $Q_t=\int_0^\infty P_s \mu_t(ds)$.  This semi-group provides us a representation of the extension 
\begin{align*}
	\disp f_t(x)=f(x,t)=Q_tf(x)=\int_{\Rd} f(y)\int_0^\infty p(s,x,y)\mu_t(ds)dy.
\end{align*}
We note that this is a convolution with the probability kernel $$q_t(x)=\int_0^\infty p(s,x,0)\mu_t(ds),$$ whose Fourier transform  is $e^{-t|\cdot|^{\alpha/2}}$. So $q_t(x)$ can be identified with the density of a symmetric $\alpha/2$-stable process, which will allow us to write the estimate (\ref{qt_estimate}) below. Moreover, $q_t(x)$ is radially decreasing in $x$. To see this, it is enough to write the representation
\begin{align}\label{dens_rep}
		p(1,x,0)=\int_{\Rd}\frac{1}{(4\pi s)^{d/2}} e^{-|x|^2/(4s)}g_{\alpha/2}(1,s)ds,
	\end{align}
where $g_{\alpha/2}$ is the density of an $\alpha/2$ stable subordinator whose Laplace transform is given by $\int_0^\infty e^{-\lambda v} g(s,v)dv=e^{-s\lambda^{\alpha/2}}$. (See \cite[p. 261]{sato}  for details.) 

One of the key tools in proving certain inequalities is the density estimates on $p(s,x,0)$. Although there is an infinite series expansion, it is not very easy to work with. For this purpose, we will use a well-known two-sided estimate
  \begin{align}\label{sas_estimate}
 		c_1\,(s^{-d/\alpha} \wedge \frac{s}{|x-y|^{d+\alpha}}) \leq p(s,x,y) \leq c_2\,(s^{-d/\alpha} \wedge \frac{s}{|x-y|^{d+\alpha}}), 
  \end{align}
$(s,x,y)\in\Rp\times\Rd\times\Rd$, which allows us to control the tail of the transition density. (See \cite[Theorem 2.1]{blumenthal}.) 
This estimate leads to an estimate on $q_t(x)$ due to the observation that it coincides with the density of a symmetric $\alpha/2$-stable process. We have 
  \begin{align}\label{qt_estimate}
 		c_1\,(t^{-2d/\alpha} \wedge \frac{t}{|x|^{d+\frac{\alpha}{2}}}) \leq q_t(x) \leq c_2\,(t^{-2d/\alpha} \wedge \frac{t}{|x|^{d+\frac{\alpha}{2}}}). 
  \end{align}

In addition, we will need to control the derivative of $p(s,x,0)$. The following Lemma provides this control. Let $\partial^k_{x_j}$ denote the $k^{th}$ partial derivative in the direction of $j^{th}$ coordinate.

\begin{lem}\label{lemma_der_of_dens}
For $k=1,2$ and $j=1,...,d$, we have
\begin{itemize}
	\item[i.] $\disp \left|\partial_{x_j}^kp(1,x,0)\right|\leq c \left( 1 \wedge \frac{1}{|x|^k}\right)p(1,x,0)$ and \\
	\item[ii.]  $\disp \left|\partial_{x_j}^kp(t,x,0)\right|\leq c \left( t^{-k/\alpha} \wedge \frac{1}{|x|^k}\right)p(t,x,0)$ whenever $t>0$.
\end{itemize}
\end{lem}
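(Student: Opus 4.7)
The plan is to prove part (i) by using the subordination representation (\ref{dens_rep}) to reduce the question to a calculation with the Gaussian kernel, and then derive (ii) from (i) via the scaling identity (\ref{scaling}).

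For (i), I would differentiate (\ref{dens_rep}) under the integral sign. Writing $h(x,s):=(4\pi s)^{-d/2}e^{-|x|^2/(4s)}$ for the Gaussian kernel,
\[
\partial_{x_j}h(x,s)=-\frac{x_j}{2s}\,h(x,s), \qquad \partial_{x_j}^2 h(x,s)=\Bigl(\frac{x_j^2}{4s^2}-\frac{1}{2s}\Bigr)h(x,s).
\]
The central manipulation is to split $e^{-|x|^2/(4s)}=e^{-|x|^2/(8s)}\cdot e^{-|x|^2/(8s)}$ and absorb the polynomial prefactor into the first half via the elementary inequalities $u\,e^{-u}, u^2 e^{-u}\le c$ applied with $u=|x|^2/(8s)$. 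This produces
\[
\bigl|\partial_{x_j}^k h(x,s)\bigr|\le \frac{c}{|x|^k}\,(4\pi s)^{-d/2}e^{-|x|^2/(8s)},\qquad k=1,2.
\]

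Integrating against $g_{\alpha/2}(1,s)\,ds$ reduces matters to comparing the auxiliary integral $I(x):=\int_0^\infty (4\pi s)^{-d/2}e^{-|x|^2/(8s)}\,g_{\alpha/2}(1,s)\,ds$ with $p(1,x,0)$ itself. After the change of variable $r=2s$, the required estimate becomes $g_{\alpha/2}(1,r/2)\le C\,g_{\alpha/2}(1,r)$, which holds because the $\alpha/2$-stable subordinator density decays super-exponentially at $0$ and has the power-law tail $\sim r^{-1-\alpha/2}$ at $\infty$, so both regimes survive halving of the argument. (An alternative, if one wants to avoid the doubling lemma, is to bound $g_{\alpha/2}(1,s)\le Cs^{-1-\alpha/2}$, evaluate $I(x)$ explicitly as a Gamma integral of order $|x|^{-d-\alpha}$, and then invoke (\ref{sas_estimate}).) Either way one obtains $|\partial_{x_j}^k p(1,x,0)|\le c|x|^{-k}p(1,x,0)$ for $x\neq 0$. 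For the $\wedge 1$ part I would use Fourier inversion, giving the uniform bound $|\partial_{x_j}^k p(1,x,0)|\le (2\pi)^{-d}\int_{\Rd}|\xi_j|^k e^{-|\xi|^\alpha}\,d\xi<\infty$, combined with the lower bound $p(1,x,0)\ge c>0$ on $|x|\le 1$ furnished by (\ref{sas_estimate}). Glued with the decay estimate, this proves (i).

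Finally, (ii) follows from (i) and the scaling relation (\ref{scaling}): differentiating $p(t,x,0)=t^{-d/\alpha}p(1,xt^{-1/\alpha},0)$ in $x_j$ gives $\partial_{x_j}^k p(t,x,0)=t^{-(d+k)/\alpha}(\partial_{y_j}^k p)(1,xt^{-1/\alpha},0)$, and applying (i) at $y=xt^{-1/\alpha}$ produces precisely $c(t^{-k/\alpha}\wedge|x|^{-k})p(t,x,0)$. The technical heart of the argument---and the step I expect to be the main obstacle---is the comparison $I(x)\le c\,p(1,x,0)$, i.e.\ the doubling-type estimate on the subordinator density.
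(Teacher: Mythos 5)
Your argument is correct, but it is a genuinely different route from the paper's: the paper does not prove this lemma from scratch at all, it simply invokes Proposition 3.3 of Bass--Chen together with the two-sided bound (\ref{sas_estimate}), whereas you give a self-contained derivation from the subordination formula (\ref{dens_rep}). Your computation of $\partial_{x_j}^k h$, the absorption of the polynomial factor via $u^k e^{-u}\le c$ at the cost of halving the Gaussian exponent, the Fourier-inversion bound combined with $p(1,x,0)\ge c$ on $|x|\le 1$, and the passage from (i) to (ii) by the scaling identity (\ref{scaling}) are all sound. You correctly identify the one nontrivial input: the comparison $I(x)\le c\,p(1,x,0)$, which after the substitution $r=2s$ reduces to the doubling bound $g_{\alpha/2}(1,r/2)\le C\,g_{\alpha/2}(1,r)$. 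That bound does hold, for the reasons you state (the ratio tends to $0$ as $r\to 0$ by the super-exponential decay $g_{\alpha/2}(1,r)\asymp r^{-a}e^{-Br^{-\alpha/(2-\alpha)}}$, tends to $2^{1+\alpha/2}$ as $r\to\infty$ by the power-law tail, and is continuous and positive in between), but these asymptotics are themselves imported from the literature on stable subordinators, so your proof is not entirely elementary either --- it trades the citation of Bass--Chen for a citation of Zolotarev/Sato-type density asymptotics. Your fallback route via the global bound $g_{\alpha/2}(1,s)\le Cs^{-1-\alpha/2}$ and the resulting Gamma integral $I(x)\le c|x|^{-d-\alpha}$ is also valid, provided you note that it is only needed (and only useful) for $|x|\ge 1$, where the lower bound in (\ref{sas_estimate}) gives $p(1,x,0)\ge c|x|^{-d-\alpha}$; for $|x|\le1$ the uniform Fourier bound already suffices. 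What your approach buys is transparency and self-containedness; what the paper's citation buys is brevity and slightly more general hypotheses (Bass--Chen treat $p(t,x,y)$ directly).
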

This Lemma is a direct consequence of Proposition 3.3 of \cite{bass_zqchen} and the inequality (\ref{sas_estimate}) above. 

For the rest of the paper, we will need some results and definitions from \cite{karli}. To keep this paper as much self-contained as possible, we provide some of these theorems and definitions here. For details, we refer to \cite{karli}.
One of the main results of \cite{karli} is that harmonic functions, defined above, satisfy the Harnack inequality. We will use this result to show boundedness of some operators in the next section. Let us denote by $D_r$ the open rectangular box with center $(y,s)\in\Rd\times\Rp$
$${D}_r=\{ (x,t)\in\Rd\times\Rp: |x_i-y_i|<\frac{r^{2/\alpha}}{2} ,\,i=1,...,d,\, x=(x_1,...,x_d),|s-t|<\frac{r}{2}\}.$$ 
When using these rectangular boxes, we will consider nested boxes with the same center. That is why we don't include the center point in the notation for simplicity, and just write $D_r$ for these rectangular boxes.\\

\begin{thm}[K. '11]\label{harnack_inequality}
	There exists $c>0$ such that if $u$ is non-negative and bounded on $\mathbb{R}^d\times \mathbb{R}^+$, harmonic in ${D}_{16}$ and ${D}_{32}\subset\Rd\times\Rp$, then 
	\begin{align*}
			u(x,t)\leq c\, u(x',t')\,, & \quad (x,t),\,(x',t')\in {D}_1.
	\end{align*}
\end{thm}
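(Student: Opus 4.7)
The plan is to exploit the martingale representation of harmonic functions together with uniform two-sided control of hitting distributions for the product process $X=(Y,Z)$. Since $u(X_{s\wedge T_0})$ is a martingale, optional stopping gives $u(x,t) = \E^{(x,t)}[u(X_{\sigma})]$ for any bounded stopping time $\sigma\leq T_0$; in particular this identity holds for the exit time $\tau_r$ from $D_r$, with $r\leq 16$. The key inputs I would establish for the pair $(Y,Z)$ on the anisotropic scale of $D_r$ are: (a) a mean exit time estimate $\E^{(x,t)}[\tau_{D_r}] \asymp r^2$, reflecting the fact that $Y$ traverses horizontal distance $r^{2/\alpha}$ in auxiliary time $r^2$ while $Z$ traverses vertical distance $r$ in the same time; (b) an interior hitting estimate showing that from any $(x,t)\in D_1$ the process enters a given Borel set $A\subset D_{8}$ before leaving $D_{16}$ with probability at least $c\,|A|/|D_{16}|$; and (c) a complementary large-jump estimate bounding $\P^{(x,t)}(X_{\tau_{D_{16}}}\in B)$ from both sides for $B$ outside $D_{16}$, obtained from \eqref{sas_estimate} using the independence of $Y$ from $T_0$.

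Given these ingredients, the classical Krylov--Safonov / Bass--Levin chaining argument can proceed without essential modification. First I would prove a weak form of the inequality: if $u\geq 1$ on a subset of $D_1$ of non-negligible Lebesgue measure, then $u\geq c>0$ throughout $D_1$, obtained by plugging the level set $\{u\geq 1\}$ into estimate (b) and invoking the martingale identity. A dyadic iteration of this weak inequality, with (c) used to absorb contributions from outside $D_{16}$, upgrades it into a pointwise comparison $u(x,t)\geq c\,u(x',t')$ whenever $(x,t),(x',t')$ lie in a common $D_{1/2}$-translate. Covering $D_1$ by a controlled number of such translates and chaining finitely many local comparisons then yields the stated inequality; the hypothesis $D_{32}\subset \Rd\times\Rp$ guarantees that all intermediate stopping times occur strictly before $T_0$, so the boundary of the half-space never intervenes.

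The main obstacle lies in proving (b) uniformly in the starting point, due to the mixed nature of $X$: the stable component $Y$ leaves $D_{16}$ by a discrete jump whose law on the complement of the horizontal window has a density controlled by the L\'evy kernel of $Y$, whereas the Brownian component $Z$ exits continuously through the top or bottom face of the vertical window. One must separate the two exit mechanisms and combine the gradient bounds from Lemma \ref{lemma_der_of_dens} with the explicit expression for $\mu_t$ and the Green's function identity \eqref{greens_function} to produce a single lower bound that does not depend on which coordinate triggers the exit. It is here that the choice of anisotropic boxes with horizontal side $r^{2/\alpha}$ and vertical side $r$ becomes essential: under any other scaling one of the two mechanisms would dominate the other and the uniformity required in (b) would break down.
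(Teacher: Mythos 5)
This theorem is quoted, not proved, in the present paper: it is a citation of \cite{karli}, where the actual argument appears. So there is no in-paper proof to compare against, but the strategy in \cite{karli} (as is standard for Harnack inequalities for jump-type processes) is precisely the Krylov--Safonov/Bass--Levin scheme you outline, adapted to the product process $X=(Y,Z)$ with the anisotropic boxes $D_r$ (horizontal side $r^{2/\alpha}$, vertical side $r$). Your high-level framework --- optional stopping, exit-time estimates $\E^{(x,t)}[\tau_{D_r}]\asymp r^2$, a Krylov--Safonov hitting estimate, and dyadic iteration with chaining --- is the right one, and your identification of the anisotropic scaling as the crucial ingredient that equilibrates the two exit mechanisms is correct.

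However, the toolkit you propose for step (b), the hitting estimate, is off-target, and this is more than a cosmetic issue since you yourself flag (b) as ``the main obstacle.'' The gradient bounds of Lemma~\ref{lemma_der_of_dens} play no role in proving a Krylov--Safonov hitting estimate; they are transition-density regularity estimates and are used elsewhere in the paper (for estimating $\psi=\partial_t q_t|_{t=1}$). Likewise the density $\mu_t$ and the Green's function identity \eqref{greens_function} both concern the process $Z$ killed at $T_0$, i.e.\ at the \emph{boundary of the half-space}, which is irrelevant here precisely because the hypothesis $D_{32}\subset\Rd\times\Rp$ keeps $X$ away from that boundary during the argument; what one needs instead is the Green's function of one-dimensional Brownian motion killed on exiting a bounded \emph{interval}, to lower-bound the occupation time of each horizontal slice. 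The correct engine for (b) is the L\'evy system of $X$: since $Y$ is the only jump component, the probability that $X$ jumps into a Borel set $A\subset D_8$ before leaving $D_{16}$ is bounded below by $\E^{(x,t)}\left[\int_0^{\tau_{D_{16}}}\int_{A_{Z_s}} n(Y_s,y)\,dy\,ds\right]$, where $n(x,y)\asymp |x-y|^{-d-\alpha}$ is the L\'evy density and $A_z$ denotes the horizontal section of $A$ at height $z$; the lower bound $n\geq c$ inside $D_{16}$ combined with the interval Green's function for $Z$ gives $\geq c|A|/|D_{16}|$. None of the three tools you cite supplies this. So while the architecture of your argument is right, the load-bearing lemma is not established by the means you indicate, and as written the proposal remains a plan rather than a proof.
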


Using this inequality, we proved a Littlewood-Paley Theorem. We defined a new operator with respect to our product process $X_s=(Y_s,Z_s)$. The horizontal component of the classical operator is replaced by the one corresponding to the symmetric stable process. The two components are defined as

$$
\disp \overrightarrow{G}_f(x) =\disp\left[ \int_0^\infty t\,\int_{\Rd} \frac{[f_t(x+h)-f_t(x)]^2}{|h|^{d+\alpha}} dh  \,dt  \right]^{1/2} \, ,
$$
and
\begin{align*}
\disp G^{\uparrow}_f(x) & =\disp\left[ \int_0^\infty t\, \left[ \frac{\partial}{\partial t} f(x,t) \right]^2 \,dt  \right]^{1/2},
\end{align*}
and hence the Littlewood-Paley operator $G_f$ is defined as 
\begin{align*}
              \disp G_f=\left[(\overrightarrow{G}_f)^2+ (G^{\uparrow}_f)^2\right]^{1/2}.
\end{align*}
Unlike the Brownian motion case, the Littlewood-Paley Theorem (Theorem \ref{LP_thm} part (i.))  cannot be extended to $p\in (1,2)$. This problem seems to occur due to the large jump terms of the horizontal process. That is why we truncated the part of the horizontal component which correspond to the large jumps. We denote this new operator obtained after truncation by $\overrightarrow{G}_{f,\alpha}$,
\begin{align*}
	\overrightarrow{G}_{f,\alpha}(x)  &  =\disp\left[ \int_0^\infty t\, \Gamma_\alpha(f_t,f_t)(x) \,dt  \right]^{1/2}, 
\end{align*}
where 
\begin{align}\label{gamma_alpha}
	\Gamma_\alpha(f_t,f_t)(x)=\int_{|h|<t^{2/\alpha}} [f_t(x+h)-f_t(x)]^2\frac{dh}{|h|^{d+\alpha}} ,
\end{align}\\
and the new restricted Littlewood-Paley operator is
\begin{align*}
	G_{f,\alpha}(x)=\left[ \left(\overrightarrow{G}_{f,\alpha}(x) \right)^2 + \left(G^{\uparrow}_f(x)\right)^2\right]^{1/2}.
\end{align*}

 \begin{thm}\label{LP_thm}
If $f\in L^p(\Rd)$, then for some constant $c>0$
	\begin{itemize}
		\item[i.] $\|G_f\|_p\leq c \|f\|_p$ for $p\geq 2$,\\
		\item[ii.]  $\|G^{\uparrow}_f\|_p\leq c \|f\|_p$ for $p>1$ and\\
		\item[iii.] $\|\overrightarrow{G}_{f,\alpha}\|_p\leq c \|f\|_p$ for $p>1$.
	\end{itemize}
\end{thm}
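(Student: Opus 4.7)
The plan is to exploit the martingale associated with the harmonic extension. Set $u(y,z) := f_z(y) = Q_z f(y)$; then $M_s := u(X_{s\wedge T_0})$ is a martingale with terminal value $f(Y_{T_0})$, and by It\^o's formula $M - M_0$ decomposes as $M^c + M^d$, where $M^c_s = \int_0^s \partial_z u(X_{r-})\,dZ_r$ is the continuous piece driven by $Z$ and $M^d$ is the compensated jump piece driven by $Y$. The predictable quadratic variation is
\begin{align*}
\langle M \rangle_s = \int_0^{s\wedge T_0}\! \Gamma(u,u)(X_r)\,dr, \quad \Gamma(u,u)(y,z) = \bigl|\partial_z u(y,z)\bigr|^2 + c\!\int_{\Rd}\!\frac{[u(y+h,z)-u(y,z)]^2}{|h|^{d+\alpha}}\,dh,
\end{align*}
whose first summand yields $\langle M^c\rangle$ (driving $G^{\uparrow}_f$) and whose second yields $\langle M^d\rangle$ (driving $\overrightarrow{G}_f$).

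For part (i) at $p = 2$, Fubini combined with the invariance of $m$ under $P_r$ (eq.~\ref{invariance}) and the vertical Green function identity (eq.~\ref{greens_function}) gives $\|G_f\|_2^2 = \lim_{a\to\infty}\Ema\langle M\rangle_{T_0}$, and BDG together with Doob's inequality bounds this by $c\|f\|_2^2$. For $p > 2$, BDG yields $\Ema\langle M\rangle_{T_0}^{p/2} \leq c_p\|f\|_p^p$, and the residual task is transferring this global bound to the pointwise integral $\int_{\Rd}G_f(x)^p\,dx$. Since $p/2\geq 1$, Jensen's inequality points in the favorable direction, and the transfer reduces to comparing $\Gamma(u,u)(x, Z_r)$ (which appears in $G_f(x)^2$) with $\Gamma(u,u)(Y_r, Z_r)$ (which appears in $\langle M\rangle_{T_0}$). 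I would carry out this comparison via the Harnack inequality (Theorem~\ref{harnack_inequality}) applied to $u$ and its $\partial_z$-derivative (again harmonic) on the nested boxes $D_r$ through which the joint trajectory $(Y_\cdot, Z_\cdot)$ travels, thereby converting trajectory-averaged estimates into pointwise ones.

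Parts (ii) and (iii) then decouple the two summands of $\Gamma$. Part (ii) targets $\langle M^c\rangle_{T_0} = \int_0^{T_0}|\partial_z u(X_r)|^2\,dr$; since $M^c$ is continuous, BDG gives two-sided $L^p$ bounds on $\langle M^c\rangle^{1/2}$ for every $p > 0$, and a Meyer-type estimate (using $\langle M^c\rangle \leq \langle M\rangle$ and $L^p$ boundedness of the orthogonal projection onto the continuous component) passes from $\|\langle M\rangle^{1/2}\|_p$ to $\|f\|_p$ for all $p > 1$. Part (iii) targets the truncated $\overrightarrow{G}_{f,\alpha}$, which corresponds to the part of $M^d$ with jumps restricted to $|h| < t^{2/\alpha}$; at this natural scale, dictated by the density estimates (\ref{sas_estimate})--(\ref{qt_estimate}), the truncated jump-martingale has well-controlled jumps, so BDG applies cleanly for all $p>1$. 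The untruncated large jumps, by contrast, are exactly what obstructs the extension of part (i) to $p\in(1,2)$ and motivates the truncation in (\ref{gamma_alpha}). The main obstacle throughout is the Harnack-based pointwise-to-trajectory comparison in part (i) for $p>2$, which is precisely where the regularity theorem of \cite{karli} becomes essential.
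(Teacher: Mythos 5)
You should first note that the paper does not prove Theorem~\ref{LP_thm} at all: it cites Meyer~\cite{meyer_long} for part~(i), Stein~\cite{Stein2} for part~(ii), and \cite[Theorem~7]{karli} for part~(iii). Your proposal is therefore a fresh attempt, not a reconstruction of the paper's argument, and it contains a genuine gap in the step you yourself flag as ``the main obstacle.''

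The martingale setup, the carr\'e-du-champ formula, and the exact $L^2$ identity $\|G_f\|_2^2 = c\lim_{a\to\infty}\Ema\langle M\rangle_{T_0}$ are all correct; that computation is essentially the one carried out inside the paper's proof of Theorem~\ref{l-star} using (\ref{invariance}) and (\ref{greens_function}). The problem is the proposed transfer for $p>2$. You want to pass from $\Ema\langle M\rangle_{T_0}^{p/2}\leq c_p\|f\|_p^p$ to $\int_{\Rd}G_f(x)^p\,dx\leq c_p\|f\|_p^p$ by Harnack-comparing $\Gamma(u,u)(x,Z_r)$ with $\Gamma(u,u)(Y_r,Z_r)$ ``on the nested boxes $D_r$ through which the joint trajectory travels.'' This cannot work as written: the horizontal component $Y$ is a pure-jump $\alpha$-stable process with jumps of all sizes, so the trajectory $(Y_r,Z_r)_{r\leq T_0}$ is not confined to any fixed stack of nested boxes around $(x,a)$. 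The Harnack inequality of Theorem~\ref{harnack_inequality} compares values of a nonnegative harmonic function at two points in the \emph{same} scaled box; bridging a macroscopic jump of $Y$ would require iterating the inequality an unbounded number of times, and the constant blows up. Moreover $\Gamma(u,u)$ is only \emph{sub}harmonic, which yields a one-sided, non-local estimate, not the local two-sided comparison Harnack provides. The workable route (and the one visible in the paper's own Theorem~\ref{l-star}) is to pair $\langle M\rangle_{T_0}$ against a boundary test function $h(X_{T_0})$; the Markov property then produces the smoothed quantity $L^*_f$ — which carries an extra $q_t$-convolution — rather than $G_f$ directly, and a separate pointwise inequality relating the smoothed and unsmoothed square functions is needed. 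Harnack is not the right tool there either, since the required estimate must hold at all scales, not only inside a fixed $D_1$.

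For the range $p\in(1,2)$ in parts~(ii) and~(iii), the BDG-based route also fails. For a discontinuous martingale, the inequality $\Ema\langle M\rangle_{T_0}^{p/2}\leq c\,\Ema|M_{T_0}|^p$ is in general \emph{false} for $p<2$ — this is exactly why part~(i) is restricted to $p\geq 2$, as the paper itself observes. The pointwise bound $\langle M^c\rangle\leq\langle M\rangle$ that you invoke therefore goes in the wrong direction, and the ``$L^p$ boundedness of the orthogonal projection onto the continuous component'' that you want for part~(ii) is a nontrivial Burkholder-type theorem in its own right, not something one can cite for free. What the paper actually does for $p\in(1,2)$ is a duality/polarization argument: the $L^2$ identity (\ref{l2_eq}) plus the $p\geq 2$ direction yield Lemma~\ref{LP_lemma}, and the same pattern underlies \cite[Theorem~7]{karli} for part~(iii), where the truncation $|h|<t^{2/\alpha}$ is precisely what makes a Harnack comparison of $f_t$ at \emph{nearby} points legitimate — a local use of Harnack on the integrand, not a comparison of $\Gamma(u,u)$ along an entire stable trajectory.
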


Part (i.) is due to a work by P.A. Meyer \cite{meyer_long}. This result is a special case of his work in which he studied symmetric Markov processes. Part (ii.) is studied by E.M. Stein in  \cite[Chapter V]{Stein2} in the case of symmetric semigroups. The proof of the third part is given in the paper \cite[Theorem 7]{karli}.

There are also some recent results based on an analytic approach to a differential equation where the fractional Laplacian is involved. In \cite{kim}, I. Kim and K. Kim discussed another operator by applying the fractional Laplacian to  $P_tf(x)$ where $P_t$ is defined as above. This operator plays the role of the classical Littlewood-Paley operator, where the Laplacian is the generator when $\alpha=2$ (that is, when the process is a Brownian motion) and hence the authors obtain an analogue of the classical inequality in fractional Laplacian case. However, as in Meyer's result (part i. of Theorem \ref{LP_thm}), this inequality holds for $p\geq 2$. One of our main results in the paper \cite{karli} (part iii. of Theorem \ref{LP_thm}) allows us to generalize this inequality first by considering the harmonic extension $Q_tf$ and then writing the integrand as the singular integral (\ref{gamma_alpha})  instead of the differential $\partial^\alpha_x$ on a restricted domain to provide some control over the large jump terms. Without this restriction, it is not possible to extend this result to $p\in(1,2)$. In this paper, we will make use of this inequality for $p>1$. 

In addition to the Theorem above, it is also not difficult to see that part (ii) can be written as a two sided-inequality. Here we provide a short proof by a well-known duality argument.
\begin{lem}\label{LP_lemma}
	If  $p>1$ and $f\in L^p(\Rd)\cap L^2({\Rd})$ then $\|f\|_p\leq c \, \|G^{\uparrow}_f\|_p$.
\end{lem}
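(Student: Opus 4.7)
The plan is to argue by duality. Since $1<p<\infty$, we have $\|f\|_p=\sup\{|\int fg\,dx|:g\in L^q(\Rd),\,\|g\|_q\le 1\}$ where $1/p+1/q=1$, and by density it suffices to take the supremum over $g\in L^q(\Rd)\cap L^2(\Rd)$. So the goal reduces to proving
$$\Bigl|\int_{\Rd} f(x)g(x)\,dx\Bigr|\le c\,\|G^{\uparrow}_f\|_p\,\|g\|_q$$
for all such $g$.

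The key ingredient is the polarization identity
$$\int_{\Rd} f(x)g(x)\,dx \;=\; 4\int_0^\infty t\int_{\Rd}\bigl(\partial_t Q_tf\bigr)(x)\bigl(\partial_t Q_tg\bigr)(x)\,dx\,dt.$$
I would prove this via Plancherel. Since the Fourier transform of $q_t$ is $e^{-t|\xi|^{\alpha/2}}$ (as recorded in the excerpt), we have $\widehat{\partial_t Q_tf}(\xi)=-|\xi|^{\alpha/2}e^{-t|\xi|^{\alpha/2}}\hat f(\xi)$, so Plancherel and Fubini give
$$\int_0^\infty t\int_{\Rd}(\partial_t Q_tf)(\partial_t Q_tg)\,dx\,dt=\int_{\Rd}\hat f(\xi)\overline{\hat g(\xi)}\,|\xi|^{\alpha}\int_0^\infty t\,e^{-2t|\xi|^{\alpha/2}}\,dt\,d\xi.$$
The inner time integral equals $1/(4|\xi|^{\alpha})$ by an elementary substitution, and one more application of Plancherel yields the claimed identity. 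Since $f,g\in L^2(\Rd)$, all manipulations are justified.

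Having the identity, I would apply the Cauchy--Schwarz inequality pointwise in $x$ to the $dt$-integral,
$$\int_0^\infty t\,|\partial_t Q_tf(x)|\,|\partial_t Q_tg(x)|\,dt\;\le\;G^{\uparrow}_f(x)\,G^{\uparrow}_g(x),$$
then Hölder's inequality with exponents $p$ and $q$,
$$\int_{\Rd} G^{\uparrow}_f(x)\,G^{\uparrow}_g(x)\,dx\;\le\;\|G^{\uparrow}_f\|_p\,\|G^{\uparrow}_g\|_q,$$
and finally part (ii) of Theorem \ref{LP_thm} applied to $g$ with exponent $q>1$ to bound $\|G^{\uparrow}_g\|_q\le c\|g\|_q$. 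Chaining these gives $|\int fg\,dx|\le 4c\,\|G^{\uparrow}_f\|_p\,\|g\|_q$, and taking the supremum over admissible $g$ completes the proof.

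The only real subtlety is the verification of the polarization identity in a form strong enough that Fubini and Plancherel apply; the $L^2$ hypothesis on both $f$ and $g$ is precisely what makes the Fourier-side computation above absolutely convergent, after which everything else is a two-step Cauchy--Schwarz/Hölder estimate plus an invocation of the already-proved bound on $G^{\uparrow}$.
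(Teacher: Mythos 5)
Your proof is correct and follows essentially the same route as the paper: duality, the polarization identity $\int fg = c\int\int_0^\infty t\,\partial_tQ_tf\,\partial_tQ_tg\,dt\,dx$, Cauchy--Schwarz in $t$, H\"older in $x$, and Theorem \ref{LP_thm}(ii) applied to the test function. The only cosmetic difference is that you establish the polarization identity directly by a Plancherel computation on the Fourier side, whereas the paper first records the $L^2$ equality $\|G^{\uparrow}_f\|_2^2=c\|f\|_2^2$ and then applies the algebraic polarization identity to $\|\cdot\|_2^2$; both routes rest on the same Plancherel calculation and are equivalent.
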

\begin{proof}
	First note that by the Plancherel identity,
		\begin{align}\label{l2_eq}
			\|G^{\uparrow}_f\|_2^2&=c \int_0^\infty t \int_{\Rd} \left| \widehat{f}(\xi)\right|^2 |\xi|^\alpha e^{-2t|\xi|^{\alpha/2}} d\xi \, dt=c \|f\|_2,
		\end{align}
	since $\left(Q_tf\right)^{\widehat{}}(\cdot)=e^{-t|\cdot|^{\alpha/2}}\widehat{f}(\cdot)$. 
	
	Second, if $h\in L^q(\Rd)\cap L^2(\Rd)$, where $1/p+1/q=1$, then using polarization identity and equality (\ref{l2_eq}),
	\begin{align*}
		\int_{\Rd} f(x)h(x)dx&=\frac{1}{4}\left( \|f+h\|^2_2-\|f-h\|^2_2\right)\\
					& = c\left( \|G^{\uparrow}_{f+h}\|^2_2-\|G^{\uparrow}_{f-h}\|^2_2\right)\\ \\
					& = c \int_{\Rd} \int_0^\infty t \frac{\partial f}{\partial t}(x,t)\frac{\partial h}{\partial t}(x,t) dt dx. \\
	\end{align*}
	Using the Cauchy-Schwartz inequality and then the H\"older inequality, we obtain
	\begin{align*}
		\int_{\Rd} f(x)h(x)dx&\leq c \int_{\Rd} G^{\uparrow}_f(x) G^{\uparrow}_h(x) dx \\
					& \leq c \|G^{\uparrow}_f\|_p \|G^{\uparrow}_h\|_q \\
					&\leq c \|G^{\uparrow}_f\|_p \|h\|_q
	\end{align*}
	where the last inequality follows from Theorem \ref{LP_thm}. 
	
	Finally, the result follows if we take supremum over all such $h$ with $\|h\|_q\leq 1$. 
\end{proof}

In the classical Littlewood-Paley theory, there are some operators which are often used to prove  intermediate steps of boundedness arguments. We believe that they should be studied and analogous results with the classical theory should be provided in order to obtain a complete picture. In the next section we will discuss some of these operators and prove their boundedness in $L^p(\Rd)$. Among  these operators, two important ones are the Area functional and $G^*$ functional. The Area functional in our setup is given by
\begin{align*}
	A_f(x)=\left[\int_0^\infty \int_{|y|<t^{2/\alpha}} t^{1-2d/\alpha} \Gamma_\alpha(f_t,f_t)(x-y)dy\,dt\right]^{1/2}.
\end{align*}
The reason for this name is that it represents the area of $f(D)$ in the classical setup ($\alpha=2$ and $\Gamma_\alpha$ is replaced by $|\nabla|^2$) where $D$ is the cone $\{(y,t): |y-x|<t\}$ and $d=2$. 

Second, we define the new $G^*$ functional by means of its horizontal and vertical components. But first we denote by $K_t^\lambda$ the function 
\begin{align*}
	K_t^\lambda(x)=t^{-2d/\alpha}\left[ \frac{t^{2/\alpha}}{t^{2/\alpha}+|x|}\right]^{\lambda d},\qquad t>0.
\end{align*}
We will take $\lambda>1$. Note that $\|K_t^\lambda\|_1=\|K_1^\lambda\|_1=c_d.$ Hence the normalized function $c_d^{-1}K_t^\lambda$ is a bounded approximate identity. Using this kernel we define two components by
\begin{align*}
	\overrightarrow{G}_{\lambda,f}^*(x)&=\left[ \int_0^\infty t \cdot K_t^\lambda*\Gamma_\alpha(f_t,f_t)(x) \, dt\right]^{1/2} , \\
	\disp {G}_{\lambda,f}^{*,\uparrow}(x)&=\left[ \int_0^\infty t \cdot K_t^\lambda*(\frac{\partial}{\partial t}f_t(\cdot))^2(x) \, dt\right]^{1/2}
\end{align*}
and the $G^*$ functional is 
\begin{align*}
	\disp {G}_{\lambda,f}^{*}(x)= \left[\left[\overrightarrow{G}_{\lambda,f}^{*}(x)\right]^2+ \left[{G}_{\lambda,f}^{*,\uparrow}(x)\right]^2\right]^{1/2}.
\end{align*}

\section{Singular Integral Operators and Boundedness Results}
As we can see in definitions of the operators, we mostly restrict our domain of integration to a parabolic-like domain in the upper half-space. By taking the scaling factor into account, we focus on the set $\{(y,t)\in\Rd\times\Rp:|y-x|<t^{2/\alpha}\}$ with vertex at $x\in \Rd$. Our first observation is that the growth of an extension function is controlled by the Hardy-Littlewood maximal function $\mathcal{M}(\cdot)$, where the Hardy-Littlewood maximal function is given by
$$\mathcal{M}(f)(x)=\sup_{r>0}\frac{1}{|B(0,1)|\cdot r^d}\int_{|y|<r}|f(x-y)|\, dy.$$
To see this, we define 
\begin{align*}
	N_\alpha^f(x):=\sup\{|f_t(y)|: t>0, |x-y|<t^{2/\alpha}\}.
\end{align*}
The classical version of this function is sometimes referred to as the (non-tangential) maximal function. (See \cite[Chapter II]{Stein2}.) In that case, the growth of this function is studied at a single point $x\in\Rd$. In our setup, we should consider the terms corresponding to jumps of the horizontal process. However, we still need to restrict our function to small jumps so that comparison of the points at any given ``height" is possible by Harnack's inequality. For this purpose, the domain is considered to be the parabolic-like region given above.

\begin{lem}
Let $p>1$ and  $f\in L^p(\Rd)$. Then 
\begin{itemize}
	\item[i.] $N_\alpha^f(x)\leq c\, \mathcal{M}(f)(x),\quad x\in\Rd,$ \\
	\item[ii.] $N_\alpha^f \in L^p(\Rd)$ and $\|N_\alpha^f\|_p\leq c\, \|f\|_p.$
\end{itemize}
\end{lem}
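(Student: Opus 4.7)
The plan is to reduce part (i) to a dyadic decomposition argument that exploits the sharp two-sided estimate (\ref{qt_estimate}) on $q_t$, and then derive part (ii) from part (i) by invoking the standard $L^p$-boundedness of the Hardy–Littlewood maximal operator for $p>1$.

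For part (i), fix $x\in\Rd$ and a point $(y,t)$ in the parabolic-like region $|x-y|<t^{2/\alpha}$. Writing $f_t=Q_tf$ as the convolution $f_t(y)=\int_{\Rd}f(z)q_t(y-z)\,dz$, I would split the integration in $z$ into dyadic annuli centered at $x$: put $A_0=\{z:|x-z|<t^{2/\alpha}\}$ and $A_k=\{z:2^{k-1}t^{2/\alpha}\le|x-z|<2^k t^{2/\alpha}\}$ for $k\ge 1$. On $A_0$ I use the bound $q_t(y-z)\le c\,t^{-2d/\alpha}$ from (\ref{qt_estimate}) together with $|A_0|\le c\,t^{2d/\alpha}$ to get $\int_{A_0}|f(z)|q_t(y-z)\,dz\le c\,\mathcal{M}(f)(x)$ directly from the definition of $\mathcal{M}$. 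On $A_k$ with $k\ge 1$, the triangle inequality yields $|y-z|\ge c\,2^k t^{2/\alpha}$, so (\ref{qt_estimate}) gives the tail bound $q_t(y-z)\le c\,t/(2^k t^{2/\alpha})^{d+\alpha/2}$.

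Combining these estimates with $\int_{|x-z|<2^k t^{2/\alpha}}|f(z)|\,dz\le c\,(2^k t^{2/\alpha})^d\mathcal{M}(f)(x)$, one finds that the contribution of $A_k$ is bounded by $c\,2^{-k\alpha/2}\mathcal{M}(f)(x)$. Summing the geometric series over $k\ge 0$ (which converges because $\alpha>0$) yields $|f_t(y)|\le c\,\mathcal{M}(f)(x)$ uniformly in $(y,t)$ with $|x-y|<t^{2/\alpha}$, and taking the supremum over such pairs proves (i). The main technical point to watch is the interaction between the scales of the parabolic cone ($|x-y|<t^{2/\alpha}$) and the stable kernel: the matching scaling of $q_t$ is what makes the dyadic bound behave like a standard maximal-function estimate, and the decay factor $2^{-k\alpha/2}$ (not $2^{-k}$) is where the tail exponent $d+\alpha/2$ from (\ref{qt_estimate}) is essential.

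Part (ii) is then immediate: the pointwise inequality in (i) gives $\|N_\alpha^f\|_p\le c\,\|\mathcal{M}(f)\|_p$, and the classical Hardy–Littlewood maximal theorem provides $\|\mathcal{M}(f)\|_p\le c_p\|f\|_p$ for every $p>1$. I do not expect any real obstacle here; the only care needed is to verify that the exponent arithmetic in the tail estimate really produces a summable geometric series, which it does thanks to $\alpha>0$.
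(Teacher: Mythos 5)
Your argument is correct, but it is a genuinely different route from the one in the paper. The paper first reduces to $f\ge 0$, then applies the Harnack inequality (Theorem~\ref{harnack_inequality}) a bounded number of times---uniformly in $t$, thanks to the parabolic scaling of the boxes $D_r$---to compare $f_t(y)$ with $f_t(x)$ for $y\in B(x,t^{2/\alpha})$, and finally invokes the standard fact that convolution with a radially decreasing $L^1$-normalized kernel ($q_t$, via the subordination representation~(\ref{dens_rep})) is controlled by $\mathcal{M}(f)(x)$. You instead bypass Harnack entirely and estimate $f_t(y)=\int f(z)q_t(y-z)\,dz$ directly by a dyadic decomposition in $z$ around $x$, feeding in the two-sided bound~(\ref{qt_estimate}) on each annulus and summing a geometric series with ratio $2^{-\alpha/2}$. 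Both approaches are sound. Yours is more elementary and self-contained in that it relies only on the explicit kernel estimate~(\ref{qt_estimate}) rather than on the Harnack inequality imported from~\cite{karli}; the paper's version, on the other hand, illustrates how the Harnack inequality substitutes for sharp kernel bounds, which is useful if one wants to push the lemma to settings where such explicit two-sided estimates are not available. One minor point to tidy in your write-up: for $k=1$ the triangle inequality does \emph{not} give $|y-z|\ge c\,2^k t^{2/\alpha}$ (the lower bound degenerates to $0$); the fix is trivial---absorb $A_1$ into $A_0$ (or start the dyadic shells at $k\ge 2$) and use the uniform bound $q_t\le c\,t^{-2d/\alpha}$ there. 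With that adjustment the geometric-series computation goes through exactly as you describe, and part~(ii) follows from the Hardy--Littlewood maximal theorem just as in the paper.
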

\begin{proof} It is enough to consider positive functions to prove the first statement. If $f$ is not positive, then we can consider the decomposition $f=f^+-f^-$, where $f^+,f^-\geq 0$. Then we can use linearity of the semi-group $Q_t$, the inequalities
$$N_\alpha^f\leq N_\alpha^{f^+}+N_\alpha^{f^-}\mbox{ and } \mathcal{M}(f^+)+\mathcal{M}(f^-)\leq 2 \mathcal{M}(f) $$
and the fact that both $Q_tf^+$ and $Q_tf^-$ are positive harmonic to prove the result for $f$. Hence we can reduce our problem to positive functions.
So suppose $f>0$. Then for a fixed $t>0$ and $y\in B(x,t^{2/\alpha})$, Theorem \ref{harnack_inequality} applied several times implies that $f_t(y)\leq c\,f_t(x)$. Here we should emphasize that the constant $c$ does not depend on the variable $t$, since these balls scale as $t$ varies and so the same number of application of the Harnack inequality suffices at each $t$ for fixed $x$. 

Moreover, $f_t(x)=f*q_t(x)$ where $q_t$ is radially decreasing and its $L^1$-norm equals one. To see this we note that  the transition density $p(s,x,0)$ is obtained from the characteristic function $e^{-s|x|^\alpha}$ by the inverse Fourier transform. Hence we can write $p(s,x,0)$ as in equation (\ref{dens_rep}). Thus $p(s,x,0)$ is radially decreasing in the variable $x$ and so is $q_t(x)$. Then $f_t(x) \leq c\, \mathcal{M}(f)(x)$ for any $t>0$ \cite[section 2.1]{grafakos}  and $N_\alpha^f(x)\leq c\, \mathcal{M}(f)(x)$. Finally, using the fact $$\|\mathcal{M}(f)\|_p\leq c\, \|f\|_p,\qquad p>1,$$ one can obtain the result.  
\end{proof}

Before we study the Area functional, we define an auxiliary operator $L_f^*$. This operator is in a close relation with $\overrightarrow{G}^*_{\lambda , f}$ for a particular value of $\lambda$ and hence it provides an intermediate step to prove boundedness of the Area functional. Moreover, the classic version $L^*_f$ is used to give a probabilistic proof of boundedness of Littlewood-Paley function. 

For a given $f\in L^p(\Rd)$, we define this operator as 
\begin{align*}
	\disp L_f^*(x)=\left[ \int_0^\infty t\cdot Q_t\Gamma_\alpha(f_t,f_t)(x) \, dt\right]^{1/2},
\end{align*}
where $\Gamma_\alpha$ is as in (\ref{gamma_alpha}).
This operator is bounded on $L^p(\Rd)$ whenever $p>2$.
\begin{thm}\label{l-star}
Let $p>2$ and $f\in L^p(\Rd)$. Then we have $$\|L_f^*\|_p\leq c \, \|f\|_p.$$
\end{thm}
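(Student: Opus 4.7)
The plan is to use the standard $L^{p/2}$–$L^{(p/2)'}$ duality trick that works precisely because $p > 2$. Squaring inside the $L^p$ norm gives $\|L_f^*\|_p^2 = \|(L_f^*)^2\|_{p/2}$, and since $p/2 > 1$ we may write
\begin{equation*}
\|(L_f^*)^2\|_{p/2} = \sup_{\phi} \int_{\Rd} (L_f^*(x))^2\, \phi(x)\, dx,
\end{equation*}
where the supremum runs over nonnegative $\phi$ with $\|\phi\|_{(p/2)'} \leq 1$ and $(p/2)' = p/(p-2)$.

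The first key step is to move $Q_t$ off of $\Gamma_\alpha(f_t, f_t)$ and onto $\phi$. Because $p(s,x,y)$ is symmetric in $(x,y)$ and $\mu_t$ is a probability measure, $Q_t$ is self-adjoint on $L^2(\Rd)$ with respect to Lebesgue measure, so by Fubini
\begin{equation*}
\int_{\Rd}\int_0^\infty t\, Q_t\Gamma_\alpha(f_t,f_t)(x)\, \phi(x)\, dt\, dx = \int_{\Rd}\int_0^\infty t\, \Gamma_\alpha(f_t,f_t)(x)\, Q_t\phi(x)\, dt\, dx.
\end{equation*}

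The second step is the trivial but crucial observation that $Q_t\phi(x) = \phi_t(x) \leq N_\alpha^\phi(x)$ for every $t > 0$, since $(x,t)$ sits on the axis of the parabolic region defining $N_\alpha^\phi(x)$. Pulling $N_\alpha^\phi(x)$ out of the $t$-integral and recognizing the remaining integrand as $(\overrightarrow{G}_{f,\alpha}(x))^2$ yields
\begin{equation*}
\int_{\Rd}(L_f^*(x))^2\,\phi(x)\, dx \leq \int_{\Rd}(\overrightarrow{G}_{f,\alpha}(x))^2\, N_\alpha^\phi(x)\, dx.
\end{equation*}

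The final step is H\"older's inequality with exponents $p/2$ and $(p/2)'$, which is where the hypothesis $p>2$ really gets used (so that $(p/2)' < \infty$ and the maximal function bounds make sense):
\begin{equation*}
\int_{\Rd}(\overrightarrow{G}_{f,\alpha})^2\, N_\alpha^\phi\, dx \leq \|\overrightarrow{G}_{f,\alpha}\|_p^2\, \|N_\alpha^\phi\|_{(p/2)'} \leq c\|f\|_p^2\, \|\phi\|_{(p/2)'} \leq c\|f\|_p^2,
\end{equation*}
using Theorem \ref{LP_thm}(iii) to bound $\|\overrightarrow{G}_{f,\alpha}\|_p$ and the preceding lemma on $N_\alpha^\phi$. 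Taking the supremum over $\phi$ gives the claim. The only substantive obstacle, apart from verifying the self-adjointness of $Q_t$, is checking measurability so that Fubini is legitimate; both are routine, and the argument indeed breaks down for $p \leq 2$ since $(p/2)' \leq 1$ would prevent the maximal function estimate from being applicable to $\phi$.
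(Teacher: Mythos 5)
Your proof is correct, but it takes a genuinely different route from the paper's. The paper also runs a duality argument over test functions $h$ with $\|h\|_q\le 1$, but it bounds the pairing probabilistically: it identifies $\int h\,(L_f^*)^2\,dx$ with $\lim_{a\to\infty}\Ema\bigl[\int_0^{T_0}\Gamma_\alpha(f_{Z_s},f_{Z_s})(Y_s)\,ds\cdot h(X_{T_0})\bigr]$ via the vertical Green function and the invariance of $P_t$, applies H\"older under $\Pma$, and then controls the moments of the additive functional through the bracket of the martingale $M^f=f(X_{\cdot\wedge T_0})$ using the Burkholder--Davis--Gundy and Doob inequalities; it never invokes Theorem \ref{LP_thm}(iii). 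You instead stay entirely on the analytic side: self-adjointness of $Q_t$ moves the kernel onto the dual function $\phi$, the pointwise bound $Q_t\phi\le N_\alpha^\phi$ reduces everything to $\int(\overrightarrow{G}_{f,\alpha})^2 N_\alpha^\phi$, and you close with H\"older plus the boundedness of $\overrightarrow{G}_{f,\alpha}$ and of $N_\alpha$. This is exactly the Fefferman--Stein style mechanism the paper itself deploys later for Theorem \ref{g_star} (with $K_t^\lambda$ and $\mathcal M(h)$ in place of $q_t$ and $N_\alpha^\phi$), so your argument is shorter and consistent with the paper's toolkit, at the price of leaning on the deep input Theorem \ref{LP_thm}(iii) imported from \cite{karli}; the paper's proof is self-contained at the martingale level. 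One small quibble: your closing remark that the argument ``breaks down'' at $p=2$ because of the maximal function is not quite right --- at $p=2$ one has $(p/2)'=\infty$ and one can simply take $\phi\equiv 1$ (or use the invariance of $Q_t$ under Lebesgue measure), so the same scheme still works there; in any case the theorem only asserts $p>2$ and your proof of that case is sound.
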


\begin{proof}
	Let $f\in L^p(\Rd)$, $r=2p$ and $q$ be the conjugate of $r$, that is, $1/r+1/q=1$. Let $h$ be a continuously differentiable function with compact support. Then
		\begin{align*}
			\lefteqn{\E^{(x,a)}\left[ \int_0^{T_0} \Gamma_\alpha (f_{Z_s},f_{Z_s})(Y_s)ds \cdot h(X_{T_0})	\right]} \\
			&=\int_0^\infty \E^{(x,a)}\left[   \E^{(x,a)}\left[ \ind_{\{s<T_0\}}\Gamma_\alpha(f_{Z_s},f_{Z_s})(Y_s) h(X_{T_0})\, |  \mathcal{F}_s \right] \right] \, ds\\
			&= \E^{(x,a)}\left[  \int_0^\infty \ind_{\{s<T_0\}}\Gamma_\alpha(f_{Z_s},f_{Z_s})(Y_s)  \E^{(x,a)}\left[h(X_{T_0})\, |  \mathcal{F}_s \right] ds\right]\\
			&= \E^{(x,a)}\left[  \int_0^\infty \ind_{\{s<T_0\}}\Gamma_\alpha(f_{Z_s},f_{Z_s})(Y_s)  \E^{X_s}\left[h(X_{T_0})\,  \right] ds\right],
		\end{align*}
	by Markov property. Then using invariance of the semi-group $P_t$ under the Lebesgue measure (equation (\ref{invariance})) and the vertical Green function (equation (\ref{greens_function})), respectively, we obtain
		\begin{align*}
			\lefteqn{\E^{m_a}\left[ \int_0^{T_0} \Gamma_\alpha (f_{Z_s},f_{Z_s})(Y_s)ds \cdot h(X_{T_0})	\right]} \\
			&=\int_{\Rd} \E^{a}\left[  \int_0^{T_0} \Gamma_\alpha(f_{Z_s},f_{Z_s})(x) \cdot  \E^{(x,Z_s)}\left[h(X_{T_0})\,  \right] ds\right] dx\\
			&=\int_{\Rd}  \int_0^{\infty} (a\wedge t) \Gamma_\alpha(f_{t},f_{t})(x) \cdot  \E^{(x,t)}\left[h(X_{T_0})\,  \right] dt\, dx.
		\end{align*}	
	Now if we take the limit as $a\rightarrow \infty$, the last expression above approaches
		\begin{align*}
			\int_{\Rd}  \int_0^{\infty} t \cdot \Gamma_\alpha(f_{t},f_{t})(x) \cdot  h_t(x)\,  dt\, dx.
		\end{align*}	
	By the symmetry of the kernel $q_t(\cdot)$, this limit equals
		\begin{align*}
			\int_{\Rd}  \int_0^{\infty} t \cdot \Gamma_\alpha(f_{t},f_{t})(x) \cdot  h*q_t(x)\,  dt\, dx&=\int_{\Rd}  \int_0^{\infty} t \cdot \Gamma_\alpha(f_{t},f_{t})*q_t(x) \cdot  h(x)\,  dt\, dx\\
				&=\int_{\Rd} h(x) \left( L_f^*(x)\right)^2 dx.
		\end{align*}	
	Next, using the H\"older inequality with exponents $q$ and $r$,
		\begin{align*}
			\lefteqn{\E^{m_a}\left[ \int_0^{T_0} \Gamma_\alpha (f_{Z_s},f_{Z_s})(Y_s)ds \cdot h(X_{T_0})	\right]} \\
			&\qquad \leq \left( \E^{m_a} | h(X_{T_0})|^q\right)^{1/q} \, \left( \E^{m_a}\left[ \int_0^{T_0} \Gamma_\alpha (f_{Z_s},f_{Z_s})(Y_s)ds\right]^r \right)^{1/r}.
		\end{align*}	
	Now denote the martingale $f(X_{t\wedge T_0})$ by $M_t^f$. By \cite[p. 158]{meyer_long} or \cite[Section 2]{karli},
		\begin{align*}
			\E^{m_a}\left[ \int_0^{T_0} \Gamma_\alpha (f_{Z_s},f_{Z_s})(Y_s)ds\right]^r \leq c\, \E^{m_a}\left[ \int_0^{T_0} g(Y_s,Z_s)ds\right]^r \leq c\, \Ema\left[ \langle M^f\rangle_{T_0}\right]^r,
		\end{align*}	
	where the function $g(y,t)$ is defined by 
		\begin{align}\label{g_func}
			g(x,t)=\int_{\Rd} \left[f_t(x+h)-f_t(x)\right]^2\frac{dh}{|h|^{d+\alpha}}+\left[\frac{\partial}{\partial t} f(x,t)\right]^2.
		\end{align}
	By Burkholder-Gundy-Davis inequality, the last term is bounded by a constant multiple of $\Ema\left[  \sup_{s\leq T_0} |M^f_{s}|\right]^{2r}$ which is bounded by $c\, \Ema\left|  M^f_{T_0}\right|^{2r}$ by Doob's inequality. Hence
		\begin{align*}
			 \lefteqn{\lim_{a\rightarrow \infty} \E^{m_a}\left[ \int_0^{T_0} \Gamma_\alpha (f_{Z_s},f_{Z_s})(Y_s)ds \cdot h(X_{T_0})	\right]} \\
			&\quad \leq c \lim_{a\rightarrow \infty}  \left( \E^{m_a} | h(X_{T_0})|^q\right)^{1/q}  \left( \E^{m_a} | f(X_{T_0})|^{2r}\right)^{1/r} \leq c\, \|h\|_q\, \|f\|_{2r}^2.
		\end{align*}
	Using the first part,
		\begin{align*}
			\int_{\Rd} h(x) \, \left( L_f^*(x)\right)^2 dx \leq c\,  \|h\|_q\, \|f\|_{2r}^2.
		\end{align*}	
	Finally, if we take supremum  over all such $h$ with $\|h\|_q\leq 1$, then 
		$$\left[ \int_{\Rd} \left(L_f^*(x)\right)^{2r} dx\right]^{1/r}\leq c\, \|f\|_{2r}^2,$$
	which gives the result if we replace $r$ with $p/2$. 
\end{proof}

 Now, if we consider $\lambda_0=(2d+\alpha)/(2d)$ then we can see the relation between two functionals $L^*_f$ and $\overrightarrow{G}^*_{\lambda_0,f}$. Hence we can show boundedness of the area functional $A_f$.

\begin{thm} Suppose $p>2$ and $f\in L^p(\Rd)$. Then we have
	\begin{itemize}
		\item[i.] for $\lambda>0$, $A_f \leq c_\lambda \, \overrightarrow{G}^*_{\lambda,f} $.\\
		\item[ii.] If $\lambda_0=(2d+\alpha)/(2d)$, then $$\|\overrightarrow{G}^*_{\lambda_0,f}\|_p \leq c\, \|f\|_p.$$ 
		\item[iii.] $\|A_f\|_p\leq c \|f\|_p$.
	\end{itemize}
\end{thm}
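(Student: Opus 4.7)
My plan is to establish (iii) by combining the pointwise comparison in (i) with the $L^p$-bound in (ii), so the real work lies in the first two parts.

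For (i), I would rewrite $A_f(x)^2$ as $\int_0^\infty\!\int_{\Rd} \ind_{\{|y|<t^{2/\alpha}\}}\,t^{1-2d/\alpha}\,\Gamma_\alpha(f_t,f_t)(x-y)\,dy\,dt$ and then reduce the claim to a pointwise kernel inequality: it suffices to show
\[\ind_{\{|y|<t^{2/\alpha}\}}\,t^{1-2d/\alpha} \;\leq\; c_\lambda\, t\, K_t^\lambda(y)\qquad \text{for every }(y,t).\]
On the support of the indicator, $t^{2/\alpha}/(t^{2/\alpha}+|y|)\geq 1/2$, hence $K_t^\lambda(y)\geq 2^{-\lambda d}\,t^{-2d/\alpha}$, and multiplying by $t$ closes the estimate with $c_\lambda = 2^{\lambda d/2}$. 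This is entirely elementary.

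For (ii), my plan is to exhibit a pointwise equivalence $K_t^{\lambda_0}(y) \asymp q_t(y)$, which then forces $\overrightarrow{G}^*_{\lambda_0,f}(x) \asymp L_f^*(x)$ and allows me to quote Theorem \ref{l-star}. The exponent $\lambda_0 = (2d+\alpha)/(2d)$ is chosen precisely so that $\lambda_0 d = d + \alpha/2$. I split the comparison into two regions: on $|y|\leq t^{2/\alpha}$ both kernels are comparable to $t^{-2d/\alpha}$, while on $|y|>t^{2/\alpha}$, $K_t^{\lambda_0}(y) \asymp t^{(2/\alpha)\cdot(\alpha/2)}/|y|^{d+\alpha/2} = t/|y|^{d+\alpha/2}$, which matches the tail in the two-sided estimate (\ref{qt_estimate}) for $q_t$. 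Once this pointwise comparability is in hand, part (ii) follows by inserting it into the defining integral and applying Theorem \ref{l-star}.

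Part (iii) is then immediate: apply (i) with $\lambda = \lambda_0$ to obtain $A_f(x) \leq c\,\overrightarrow{G}^*_{\lambda_0,f}(x)$ pointwise, take $L^p$ norms, and invoke (ii).

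The only delicate point is the kernel comparison in (ii); the precise choice $\lambda_0 = 1 + \alpha/(2d)$ is dictated by the polynomial decay rate $|y|^{-d-\alpha/2}$ of the $\alpha/2$-stable density, and the condition $\lambda_0>1$ needed for integrability of $K_t^{\lambda_0}$ is automatic since $\alpha>0$. Everything else is routine kernel bookkeeping.
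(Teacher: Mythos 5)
Your proposal is correct and follows essentially the same route as the paper: part (i) via the elementary lower bound $K_t^\lambda(y)\geq 2^{-\lambda d}t^{-2d/\alpha}$ on $\{|y|<t^{2/\alpha}\}$, part (ii) via the pointwise comparison $K_t^{\lambda_0}\leq c\,q_t$ (the choice $\lambda_0 d=d+\alpha/2$ matching the tail in (\ref{qt_estimate})) followed by Theorem \ref{l-star}, and part (iii) as an immediate corollary. No gaps.
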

\begin{proof}
	Part (i.) is easy when we observe $$\left[\frac{t^{2/\alpha}}{t^{2/\alpha}+|y|}\right]^{\lambda d}\geq 2^{-\lambda d}$$ for $|y|<t^{2/\alpha}$.
	Part (iii.) is a corollary of (i.) and (ii.). So it is enough to prove (ii.).
	First we recall that 
	$$K_t^{\lambda_0}(x)=\frac{t}{(t^{2/\alpha}+|x|)^{d+\frac{\alpha}{2}}}=t^{-2d/\alpha} \left(\frac{1}{1+\frac{|x|}{t^{2/\alpha}}}\right)^{d+\frac{\alpha}{2}}.$$
	We also know that $q_t(x)$ is comparable to $$t^{-2d/\alpha} \wedge \frac{t}{|x|^{d+\frac{\alpha}{2}}}=t^{-2d/\alpha} \left(1\wedge \frac{1}{\left(\frac{|x|}{t^{2/\alpha}}\right)^{d+\frac{\alpha}{2}}}\right),$$ by (\ref{qt_estimate}).
	Hence $q_t$ is comparable to $K_t^{\lambda_0}$ and we have
	\begin{align*}
		K_t^{\lambda_0}\leq c\, q_t(x).
	\end{align*}
	This leads to 
	\begin{align*}
		\overrightarrow{G}^*_{\lambda_0,f}(x)\leq c L_f^*(x).
	\end{align*}
	Then the result follows from Theorem \ref{l-star}. 
\end{proof}

The result of the previous theorem is not restricted to the horizontal component with parameter $\lambda_0$. We can generalise this result to the case including the vertical component  and any parameter $\lambda>1$.
\begin{thm}\label{g_star}
	If $\lambda>1, p\geq 2$ and $f\in L^p(\Rd)$ then $$\|{G}^*_{\lambda,f}\|_p \leq c\, \|f\|_p.$$
\end{thm}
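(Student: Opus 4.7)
The plan is to use a duality argument through $L^{p/2}$, which is available because $p \geq 2$. Write
\[ g(x,t) := \Gamma_\alpha(f_t,f_t)(x) + \left(\frac{\partial}{\partial t}f_t(x)\right)^2, \]
so that
\[ (G^*_{\lambda,f}(x))^2 = \int_0^\infty t\,(K_t^\lambda * g(\cdot,t))(x)\,dt \quad\text{and}\quad (G_{f,\alpha}(x))^2 = \int_0^\infty t\,g(x,t)\,dt. \]
Hence the task reduces to moving the convolution with $K_t^\lambda$ off of $g$. For any nonnegative $\phi\in L^{(p/2)'}(\Rd)$ with $\|\phi\|_{(p/2)'}\leq 1$, Fubini's theorem together with the radial symmetry of $K_t^\lambda$ gives
\[ \int_{\Rd}(G^*_{\lambda,f}(x))^2\phi(x)\,dx = \int_0^\infty\!\int_{\Rd} t\,g(x,t)\,(K_t^\lambda*\phi)(x)\,dx\,dt. \]

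The central point is the uniform-in-$t$ pointwise bound $(K_t^\lambda*\phi)(x)\leq c\,\mathcal{M}(\phi)(x)$. This follows from the representation $K_t^\lambda(x) = t^{-2d/\alpha}\Psi(x/t^{2/\alpha})$ with $\Psi(y)=(1+|y|)^{-\lambda d}$: since $\lambda>1$, the profile $\Psi$ is radially decreasing and integrable (with mass $c_d$ independent of $t$), and every such dilate of a radially decreasing integrable function is dominated by its $L^1$-norm times the Hardy--Littlewood maximal function (see, e.g., \cite[Ch.~II]{Stein2}). Substituting this estimate and identifying the inner $t$-integral with $(G_{f,\alpha}(x))^2$,
\[ \int_{\Rd}(G^*_{\lambda,f}(x))^2\phi(x)\,dx \leq c\int_{\Rd}\mathcal{M}(\phi)(x)\,(G_{f,\alpha}(x))^2\,dx \leq c\,\|\mathcal{M}(\phi)\|_{(p/2)'}\,\|G_{f,\alpha}\|_p^2, \]
by H\"older's inequality. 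The maximal function is bounded on $L^{(p/2)'}(\Rd)$: for $p>2$ by the Hardy--Littlewood theorem, and for $p=2$ trivially since the dual exponent is $\infty$. Combining with $\|G_{f,\alpha}\|_p\leq c\,\|f\|_p$, which follows from Theorem~\ref{LP_thm} parts (ii)--(iii), and taking the supremum over such $\phi$ yields $\|G^*_{\lambda,f}\|_p^2 = \|(G^*_{\lambda,f})^2\|_{p/2} \leq c\,\|f\|_p^2$.

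The main obstacle is the uniform-in-$t$ maximal bound for $K_t^\lambda*\phi$, and this is precisely where the hypothesis $\lambda>1$ enters in an essential way, since it is exactly what makes $\Psi$ integrable. The endpoint $p=2$ is a mild nuisance because the formal dual exponent becomes $\infty$; it can alternatively be handled without duality at all, by integrating in $x$ first: $\|G^*_{\lambda,f}\|_2^2 = c_d\,\|G_{f,\alpha}\|_2^2$, and then Theorem~\ref{LP_thm} applies. The rest is routine Fubini and H\"older bookkeeping.
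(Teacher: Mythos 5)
Your proof is correct and follows essentially the same route as the paper: duality through $L^{p/2}$, Fubini and the radial symmetry of $K_t^\lambda$ to move the convolution onto the test function, the uniform bound $K_t^\lambda*\phi\leq c\,\mathcal{M}(\phi)$ from the fact that $K_t^\lambda$ is a dilate of a radially decreasing integrable profile, and then H\"older together with Theorem~\ref{LP_thm}. Your explicit identification $K_t^\lambda(x)=t^{-2d/\alpha}\Psi(x/t^{2/\alpha})$ with $\Psi(y)=(1+|y|)^{-\lambda d}$, which pinpoints where $\lambda>1$ is used, and the direct $L^2$ computation $\|G^*_{\lambda,f}\|_2^2=c_d\|G_{f,\alpha}\|_2^2$ at the endpoint are clean touches, but they do not change the substance of the argument.
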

\begin{proof}
	Let us denote by $g_\alpha(y,t)$ the function 
	$$\Gamma_\alpha(f_t,f_t)(y) + \left(\frac{\partial}{\partial t}f(y,t)\right)^2.$$
	Assume $h\in \mathcal{C}_K^1(\Rd)$. Then by the symmetry of $K_t^\lambda(x)$ in $x$,
	\begin{align*}
		\int_{\Rd} h(x) \, (G^*_{\lambda,f}(x))^2dx&=\int_0^\infty t \int_{\Rd} h(x) \int_{\Rd} K_t^\lambda(x-y)\,g_\alpha(y,t)\, dy\, dx\, dt\\
			&=\int_0^\infty t \int_{\Rd} g_\alpha(y,t) \cdot h*K_t^\lambda(y) \, dy\, dt.
	\end{align*}
	Since $K_t^\lambda$ is radially decreasing and integrable, $h*K_t^\lambda(y)\leq c\, \mathcal{M}(h)(y)$. Hence 
	\begin{align}\label{g_max_ineq}
		\int_{\Rd} h(x) \, (G^*_{\lambda,f}(x))^2dx&\leq c\int_{\Rd} \mathcal{M}(h)(x) \, (G_{f,\alpha}(x))^2 dx.
	\end{align}
	For $p=2$, it is enough to consider $h=1$. Then by parts (ii.) and (iii.) of Theorem \ref{LP_thm},
	$$\|{G}^*_{\lambda,f}\|_2\leq c \|{G}_{f,\alpha}\|_2\leq c\, \|f\|_2.$$
	Now suppose $p>2$. We take $r=p/2$ and $q>0$ such that $1/r+1/q=1$. Using H\"older's inequality in (\ref{g_max_ineq}),
	\begin{align*}
		\int_{\Rd} h(x) \, (G^*_{\lambda,f}(x))^2dx&\leq c \left[ \int_{\Rd} (\mathcal{M}(h)(x))^q\,dx \right]^{1/q} \cdot \left[ \int_{\Rd} (G_{f,\alpha}(x))^{2r}\,dx \right]^{1/r} \\
			& \leq c\, \|h\|_q \|G_{f,\alpha}\|_p^2.
	\end{align*}
	If we take supremum over all such $h$ with $\|h\|_q\leq 1$, then we obtain
	\begin{align*}
		\|G^*_{\lambda,f}\|_p^2=\|(G^*_{\lambda,f})^2\|_r\leq c\, \|G_{f,\alpha}\|_p^2.
	\end{align*} 
	Finally, using the boundedness of the operator $G_{f,\alpha}$ when $p>2$ (Theorem \ref{LP_thm}), we prove the desired result. 
\end{proof}

In the final part of the paper, we discuss an application of the previous Theorem. We will provide a result on the boundedness of singular integrals which is a generalization of Theorem \ref{classic_case}. We show that the result holds under a weaker condition on the tail of the kernel. For this purpose, we impose a boundedness condition in terms of the semi-group $Q_t$. 
\begin{thm}\label{mult_1}
	Let $p>1$. Suppose $T$ is a convolution operator on $L^p(\Rd)$ with kernel $\kappa$, that is, $Tf(x)=f*\kappa(x)$. Suppose further that there exists $\lambda>1$ such that 
	\begin{align} \label{assump_1}
		|\partial_t Q_t\kappa(x)|\leq c\, t^{-1-2d/\alpha}\left( \frac{t^{2/\alpha}}{t^{2/\alpha}+|x|} \right)^{\lambda d}=c\, t^{-1}K_t^\lambda(x).
	\end{align}
	 Then for $f\in C_K^1$ (, that is,  $f\in C^1$ with compact support) $$\|Tf\|_p\leq c \, \|f\|_p.$$
\end{thm}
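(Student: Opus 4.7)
The plan is to combine Lemma~\ref{LP_lemma} (which reduces the $L^p$-boundedness of $T$ to the Littlewood--Paley estimate $\|G^\uparrow_{Tf}\|_p \leq c\|f\|_p$) with Theorem~\ref{g_star} on the boundedness of the $G^*$-functional. I aim to establish a pointwise inequality $G^\uparrow_{Tf}(x) \leq c\, G^{*,\uparrow}_{\lambda,f}(x)$, which upon applying Theorem~\ref{g_star} closes the argument for $p \geq 2$, and then a duality argument will handle $1 < p < 2$.

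The core identity is
\begin{equation*}
\partial_t (Tf)_t(x) = \bigl(Q_{t/2}\kappa\bigr) * \partial_{t/2} f_{t/2}(x),
\end{equation*}
which follows because $T$ and $Q_t$ commute (both are spatial convolutions), together with the semigroup factorization $Q_t = Q_{t/2}\circ Q_{t/2}$ and $\partial_t Q_t g = Q_{t/2}(\partial_s Q_s g|_{s=t/2})$. This form is tailored to the appearance of $\partial_{t/2}f_{t/2}$ in $G^{*,\uparrow}_{\lambda,f}$. A bound on $|Q_{t/2}\kappa(y)|$ is then obtained by integrating the hypothesis (\ref{assump_1}) from $t/2$ to infinity (using $Q_s\kappa(y) \to 0$ as $s \to \infty$, which holds for the hypothesized kernels, and the zero-mean cancellation $\int \partial_s Q_s\kappa(y)\,dy = 0$, valid because $\int Q_s\kappa \, dy$ is constant in $s$ by the $Q_s$-invariance of Lebesgue measure recorded in (\ref{invariance})). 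A weighted Cauchy--Schwarz in $y$ with the kernel $K_t^\lambda$ should then give $t\,|\partial_t(Tf)_t(x)|^2 \leq c\, s\, K_s^\lambda * (\partial_s f_s)^2(x)$ with $s = t/2$, and integration in $t$ together with the substitution $s = t/2$ yields the desired pointwise estimate.

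For $p \geq 2$, Theorem~\ref{g_star} then closes the argument. For $1 < p < 2$, I argue by duality: the adjoint $T^*$ is convolution with $\tilde{\kappa}(x) = \kappa(-x)$, which also satisfies (\ref{assump_1}) since $K_t^\lambda$ is radial, so $\|T^*h\|_q \leq c\,\|h\|_q$ for the conjugate exponent $q > 2$ by the $p \geq 2$ case, giving
\begin{equation*}
\|Tf\|_p = \sup_{\|h\|_q \leq 1}\Bigl|\int f\cdot T^*h\,dx\Bigr| \leq c\,\|f\|_p.
\end{equation*}

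The hardest step is the weighted Cauchy--Schwarz estimate producing the pointwise bound on $|\partial_t(Tf)_t(x)|^2$. The naive integrated estimate gives only $|Q_{t/2}\kappa(y)| \lesssim \min\{t^{-2d/\alpha}, |y|^{-d}\}$, a bound of the form of the non-integrable borderline kernel $K_t^{\lambda=1}$, so a direct Cauchy--Schwarz with $K_t^\lambda$ ($\lambda > 1$) as weight leaves the factor $\int |Q_{t/2}\kappa|^2/K_t^\lambda\,dy$ divergent. Producing a usable bound therefore requires simultaneously exploiting the extra decay $\lambda > 1$ in (\ref{assump_1}) and the zero-mean cancellation of $\partial_s Q_s\kappa$ -- for instance by inserting appropriate mean-value subtractions before applying Cauchy--Schwarz, so that the weight matches the $G^{*,\uparrow}_{\lambda,f}$ structure -- and this is where the argument is most delicate.
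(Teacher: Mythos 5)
Your outer framework is correct and matches the paper: reduce $L^p$-boundedness of $T$ to the pointwise inequality $G^\uparrow_{Tf}(x)\lesssim G^{*,\uparrow}_{\lambda,f}(x)$, invoke Lemma~\ref{LP_lemma} and Theorem~\ref{g_star} for $p\geq 2$, and handle $1<p<2$ by duality with the reflected kernel $\kappa^*(x)=\kappa(-x)$. The factorization $\partial_t Q_tTf = Q_{t/2}\kappa * (\partial_u Q_uf)|_{u=t/2}$ (up to a constant) is also the right starting point. But the route you propose to the pointwise estimate does not close, and you yourself flag exactly why: after integrating (\ref{assump_1}) in $s$ from $t/2$ to $\infty$ you only recover $|Q_{t/2}\kappa(y)|\lesssim\min\{t^{-2d/\alpha},|y|^{-d}\}$, the non-integrable borderline kernel, so the weighted Cauchy--Schwarz in $y$ with weight $K_t^\lambda$ leaves the factor $\int |Q_{t/2}\kappa|^2/K_t^\lambda\,dy$ divergent whenever $\lambda>1$. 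The fix you gesture at --- inserting mean-value subtractions to exploit the cancellation $\int\partial_sQ_s\kappa=0$ --- is not available in the form you state, because the $s$-integration that produces $Q_{t/2}\kappa$ has absolute values inside, which destroys the cancellation before it can be used.

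The paper takes a different and more robust route that avoids ever bounding $Q_{t/2}\kappa$ pointwise. Set $h(t,x)=(1/2)\partial_tQ_tTf(x)=Q_{t/2}T\bigl(\partial_tQ_{t/2}f\bigr)(x)$. Since $h(t,x)\to 0$ as $t\to\infty$, write $h(t,x)=-\int_t^\infty\partial_s h(s,x)\,ds$ and apply Cauchy--Schwarz \emph{in the time variable} with the weights $s^{-1}$ and $s$:
\begin{equation*}
\bigl(G^\uparrow_{Tf}(x)\bigr)^2 \leq c\int_0^\infty t\cdot\Bigl(\int_t^\infty s^{-2}\,ds\Bigr)\Bigl(\int_t^\infty s^2\,|\partial_s h(s,x)|^2\,ds\Bigr)dt
= c\int_0^\infty s^3\,|\partial_s h(s,x)|^2\,ds,
\end{equation*}
after cancelling $t\cdot t^{-1}=1$ and applying Fubini. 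The decisive point is that $\partial_s h(s,x)$ is (up to a constant) the convolution $\bigl(\partial_uQ_u\kappa\bigr)\big|_{u=s/2}*\bigl(\partial_uQ_uf\bigr)\big|_{u=s/2}(x)$, so the spatial kernel appearing here is the \emph{derivative} $\partial_uQ_u\kappa$, which hypothesis (\ref{assump_1}) bounds by $c\,s^{-1}K_{s/2}^\lambda$ --- an $L^1$ kernel with $\|K_{s/2}^\lambda\|_1=c_d$. One then applies Jensen's inequality (not Cauchy--Schwarz) with the normalized probability kernel $c_d^{-1}K_{s/2}^\lambda$ to bring the square inside the convolution, producing $\int_0^\infty s\,K_{s/2}^\lambda*(\partial_sQ_{s/2}f)^2(x)\,ds = c\,(G^{*,\uparrow}_{\lambda,f}(x))^2$ after a change of variables. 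In short: the fundamental-theorem-of-calculus step in $t$ plus Cauchy--Schwarz in $s$ converts the problematic borderline kernel $Q_{t/2}\kappa$ into the integrable kernel $\partial Q_{s/2}\kappa$ before any spatial estimate is attempted. That substitution --- Cauchy--Schwarz in time rather than space --- is the missing device in your proposal.
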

The condition  (\ref{assump_1}) above may not seem very useful in terms of applications when we consider its current form. Hence we will provide a sufficient and a more useful condition later in Theorem \ref{main_thm} below.
\begin{proof}
	First suppose $p>2$. We note that by the semi-group property, we have $Q_t=Q_{t/2}Q_{t/2}$ and $q_t=q_{t/2}*q_{t/2}$, which leads to $\partial_tq_t=2q_{t/2}*\partial_tq_{t/2}$. Next,  we observe that $\partial_t Q_{t}Tf(x)=2Q_{t/2}T(\partial_tQ_{t/2}f)(x)$, since their Fourier transforms are equal , that is,
	$$\widehat{\left[ 2Q_{t/2}T(\partial_tQ_{t/2}f)\right]}=2\widehat{q_{t/2}}\,\widehat{\kappa}\,\widehat{(\partial_tq_{t/2})}\widehat{f}=\widehat{(2q_{t/2}*\partial_tq_{t/2})\,}\widehat{\kappa}\,\widehat{f}=\widehat{\partial_tq_{t}}\,\widehat{\kappa}\,\widehat{f}=\widehat{\partial_t Q_{t}Tf}.$$
	Then
	\begin{align*}
		\left(G^\uparrow_{Tf}(x)\right)^2&=\int_0^\infty t\, \left| \partial_t Q_tTf(x) \right|^2 \, dt=4\int_0^\infty t\, \left|Q_{t/2}T(\partial_tQ_{t/2}f)(x) \right|^2 \, dt.
	\end{align*}
	Using our assumption (\ref{assump_1}), we can see that $Q_{t/2}T(\partial_tQ_{t/2}f)=(1/2)\partial_t Q_{t}Tf(x)\rightarrow 0$ as $t\rightarrow \infty$. Hence the last line above equals
	\begin{align*}
		4\int_0^\infty t\, \left|\int_t^\infty \frac{s}{s}\, \partial_s Q_{s/2}T(\partial_sQ_{s/2}f)(x)ds \right|^2 \, dt.
	\end{align*}
	If we apply the Cauchy-Schwartz inequality first, and then change the order of the integrals, we get
	\begin{align*}
		\left(G^\uparrow_{Tf}(x)\right)^2&\leq c \int_0^\infty t \left[ \int_t^\infty s^{-2}ds\right]\cdot\left[ \int_t^\infty s^2\, (\partial_s Q_{s/2}T(\partial_sQ_{s/2}f)(x))^2 ds \right]\, dt\\
			&=c\, \int_0^\infty   \int_t^\infty s^2\, (\partial_s Q_{s/2}T(\partial_sQ_{s/2}f)(x))^2 ds  dt\\
			&=c\, \int_0^\infty   s^3\, (\partial_s Q_{s/2}T(\partial_sQ_{s/2}f)(x))^2 ds  .\\
	\end{align*}
	Using the bound in (\ref{assump_1}) and Jensen's inequality,
	\begin{align*}
		\left(G^\uparrow_{Tf}(x)\right)^2&\leq c\, \int_0^\infty   s^3\, \left[(s^{-1}K_{s/2}^{\lambda})*(\partial_sQ_{s/2}f)(x))\right]^2 ds  \\
				& \leq c\, \int_0^\infty   s\, K_{s/2}^{\lambda}*(\partial_sQ_{s/2}f)^2(x) ds \\
				&\leq c \left( G^*_{\lambda,f}(x)\right)^2.
	\end{align*}
	Hence for $p>2$,
	\begin{align*}
		\|Tf\|_p\leq c\|G^\uparrow_{Tf}\|_p\leq c\, \|G^*_{\lambda,f}\|_p\leq c\, \|f\|_p,
	\end{align*}
	by Lemma \ref{LP_lemma} and Theorem \ref{g_star}.
	
	For $p\in(1,2)$ we use a duality argument. For this purpose let $q$ be such that $1/p+1/q=1$. First we observe that if $\kappa^*(x)=\kappa(-x)$ and $T^*$ is the convolution operator corresponding to $\kappa^*$, then the condition (\ref{assump_1}) holds for $\kappa^*$. Then for $h\in L^q(\Rd)$
	\begin{align*}
		\left|\int_{\Rd} h(x) \, Tf(x)\, dx\right|=\left|\int_{\Rd} T^*h(x) \, f(x)\, dx\right|\leq c \|T^*h\|_q\, \|f\|_p \leq \|h\|_q\, \|f\|_p
	\end{align*}
	by the first part of the proof. Finally, if we take supremum over all such $h$ with $\|h\|_q\leq 1$, the result follows. 
\end{proof}

Before any further discussion, we recall the definition of the measure 
$$\mu_t(ds)=\frac{t}{2\sqrt{\pi}}e^{-t^2/4s}s^{-3/2}ds$$
and show the following estimates.

\begin{lem}\label{tech_lemma_1}
For $M>0$, we have
\begin{itemize}
	\item[i.] $\disp \int_0^M |s-1/2|\, \mu_1(ds)\leq \frac{1}{\sqrt{\pi}}\, M^{1/2}$, \\
	\item[ii.] $\disp \int_M^\infty |1-1/(2s)|\, \mu_1(ds)\leq \frac{1}{\sqrt{\pi}}\, M^{-1/2}$.
\end{itemize}
\end{lem}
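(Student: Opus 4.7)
My plan is to pass to a Gaussian integral via the substitution $u=1/(2\sqrt{s})$, under which $\mu_1(ds)=\tfrac{2}{\sqrt{\pi}}e^{-u^2}\,du$ (with orientation reversed, since $u$ decreases with $s$). Writing $a:=1/(2\sqrt{M})$, so that $s=M$ corresponds to $u=a$, the two claimed inequalities become
\[
\int_a^\infty \frac{|1-2u^2|}{u^2}\,e^{-u^2}\,du\;\le\;\frac{1}{a}\qquad\text{and}\qquad \int_0^a |1-2u^2|\,e^{-u^2}\,du\;\le\;a,
\]
respectively.

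For part (i) I intend to use the pointwise bound $|1-2u^2|\le 1+2u^2$ together with the integration-by-parts identity $\int u^{-2}e^{-u^2}\,du = -u^{-1}e^{-u^2}-2\int e^{-u^2}\,du$. A cancellation between the two resulting pieces produces
\[
\int_a^\infty \frac{1+2u^2}{u^2}\,e^{-u^2}\,du\;=\;\int_a^\infty u^{-2}e^{-u^2}\,du + 2\int_a^\infty e^{-u^2}\,du\;=\;\frac{e^{-a^2}}{a}\;\le\;\frac{1}{a},
\]
which settles (i).

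The same crude bound is not sharp enough for part (ii) (one checks that using it costs a factor of $2$ in the limit $a\to 0$), so I will compute more carefully. The exact identity $\tfrac{d}{du}[ue^{-u^2}]=(1-2u^2)e^{-u^2}$ gives $\int_0^a(1-2u^2)e^{-u^2}\,du = ae^{-a^2}$. When $a\le 1/\sqrt{2}$ (equivalently $M\ge 1/2$), the integrand $1-2u^2$ is nonnegative on $[0,a]$, so $\int_0^a|1-2u^2|e^{-u^2}\,du = ae^{-a^2}\le a$ and we are done. When $a>1/\sqrt{2}$ I split the integral at $u=1/\sqrt{2}$ and apply the same antiderivative on each of the two pieces; they combine to
\[
\int_0^a |1-2u^2|\,e^{-u^2}\,du\;=\;\sqrt{2}\,e^{-1/2}-ae^{-a^2},
\]
so (ii) reduces to the elementary inequality $\sqrt{2}\,e^{-1/2}\le a(1+e^{-a^2})$ for $a\ge 1/\sqrt{2}$.

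This residual inequality is the main (and only nontrivial) obstacle. I will handle it by evaluating at the base point $a=1/\sqrt{2}$, where it amounts to $1\ge e^{-1/2}$, and then by proving that $\phi(a):=a(1+e^{-a^2})$ is strictly increasing on $(0,\infty)$. Computing $\phi'(a)=1-(2a^2-1)e^{-a^2}$ and noting that $(2a^2-1)e^{-a^2}$ attains its maximum $2e^{-3/2}\approx 0.446$ at $a=\sqrt{3/2}$ gives $\phi'(a)\ge 1-2e^{-3/2}>0$, closing the argument.
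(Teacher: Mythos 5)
Your proposal is correct --- I checked each step --- but it takes a markedly different and substantially longer route than the paper. The paper's argument is essentially one line per part: it rests on the single pointwise inequality $|1-1/(2s)|\,e^{-1/(4s)}\leq 1$ for all $s>0$, equivalently $|s-1/2|\,s^{-1}e^{-1/(4s)}\leq 1$ (the exponents printed in the paper's proof appear to contain typos, since $|s-1/2|\,s^{-1/2}e^{-1/(4s)}$ and $|s-1/2|\,e^{-1/(4s)}$ are each unbounded as $s\to\infty$, but the correct bound with $s^{-1}$ makes the argument work). With that bound, the integrand of (i) becomes $|s-1/2|\,\frac{1}{2\sqrt{\pi}}e^{-1/(4s)}s^{-3/2}=\frac{1}{2\sqrt{\pi}}\bigl(|1-1/(2s)|e^{-1/(4s)}\bigr)s^{-1/2}\leq\frac{1}{2\sqrt{\pi}}s^{-1/2}$, and the integrand of (ii) becomes $\leq\frac{1}{2\sqrt{\pi}}s^{-3/2}$; integrating gives exactly the stated constants. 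You instead change variables to pass to Gaussian integrals, exploit an integration-by-parts cancellation to get a closed form for (i) (yielding the slightly sharper bound $M^{1/2}\pi^{-1/2}e^{-1/(4M)}$), compute the antiderivative of $(1-2u^2)e^{-u^2}$ piecewise for (ii), and finally verify the residual inequality $\sqrt{2}\,e^{-1/2}\leq a(1+e^{-a^2})$ by a monotonicity estimate. Both proofs are valid; the paper's is far more economical, a single crude pointwise bound feeding two trivial integrals, while yours buys the extra decay factor $e^{-1/(4M)}$, which the lemma does not need, at the cost of considerably more bookkeeping.
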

\begin{proof}
 (i.) We note that 
	\begin{align*}
		|s-1/2|\,  s^{-1/2} e^{-1/(4s)} \leq 1
	\end{align*}
for $s>0$. Hence the result follows.

 (ii.) Similarly, we also have 
	\begin{align*}
		|s-1/2|\,  e^{-1/(4s)} \leq 1,
	\end{align*}
	which results in the desired inequality.
\end{proof}

\begin{lem}\label{estimate_1}
Suppose $\disp\psi(x)=\left(\partial_tq_t(x)\right)_{t=1}=\int_0^\infty p(s,x,0)\left(1-\frac{1}{2s}\right)\mu_1(ds)$. Then for some positive constants $c, c_1, c_2$ we have
\begin{itemize}
	\item[i.] $\disp \left| \psi(x)\right|\leq c_1\, \left(1 \wedge |x|^{-d-\frac{\alpha}{2}}\right)\leq c_2 \,q_1(x)$ and \\
	\item[ii.] $\disp \left| \partial_{x_i} \psi(x)\right|\leq c\,\left(1 \wedge |x|^{-d-1-\alpha/2}\right)$,  $i=1,...,d$.
\end{itemize}

\end{lem}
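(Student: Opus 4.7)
The plan is to recognize that $\psi$ is really $\partial_t q_t$ at $t=1$ and then exploit the scaling structure of the $\alpha/2$-stable density $q_1$. A direct computation gives
\[
\partial_t\mu_t(ds)\Big|_{t=1}=\Big(1-\frac{1}{2s}\Big)\mu_1(ds),
\]
so $\psi(x)=\partial_t q_t(x)\big|_{t=1}$. Because $q_t$ is the transition density of a symmetric $\alpha/2$-stable process, it scales as $q_t(x)=t^{-2d/\alpha}q_1(x/t^{2/\alpha})$; differentiating this identity in $t$ and setting $t=1$ yields the clean representation
\[
\psi(x)=-\frac{2d}{\alpha}\,q_1(x)-\frac{2}{\alpha}\,x\cdot\nabla q_1(x).
\]
This reduces everything to estimates on $q_1$ and its derivatives.

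For part (i), I would apply Lemma \ref{lemma_der_of_dens} to $q_1$ (with parameter $\alpha/2$ in place of $\alpha$, as the cited Proposition 3.3 of \cite{bass_zqchen} holds for any symmetric stable density) to get $|\partial_{x_j}q_1(x)|\leq c(1\wedge|x|^{-1})q_1(x)$. Then
\[
|x\cdot\nabla q_1(x)|\leq c\,|x|\,(1\wedge|x|^{-1})\,q_1(x)\leq c\,q_1(x),
\]
since $|x|(1\wedge|x|^{-1})\leq 1$. Combining the two terms in the representation of $\psi$ gives $|\psi(x)|\leq c\,q_1(x)$, and the inequality (\ref{qt_estimate}) at $t=1$ finishes both inequalities in (i) at once.

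For part (ii) I would differentiate the scaling representation once more, which produces
\[
\partial_{x_j}\psi(x)=-\frac{2(d+1)}{\alpha}\,\partial_{x_j}q_1(x)-\frac{2}{\alpha}\sum_{i=1}^{d} x_i\,\partial_{x_j}\partial_{x_i}q_1(x),
\]
and then invoke Lemma \ref{lemma_der_of_dens} again, now for $k=2$, to bound the second derivatives by $c(1\wedge|x|^{-2})q_1(x)$. Using (\ref{qt_estimate}) to split into the regimes $|x|\leq 1$ (where everything is bounded) and $|x|>1$ (where $q_1(x)\leq c|x|^{-d-\alpha/2}$ and the extra factor $|x|\cdot|x|^{-2}$ contributes $|x|^{-1}$) yields $|\partial_{x_j}\psi(x)|\leq c(1\wedge|x|^{-d-1-\alpha/2})$. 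The only mildly technical point is justifying the differentiation under the integral defining $\psi$, but this follows from the uniform integrability supplied by the Gaussian factor in $\mu_1$ together with the density estimates of Lemma \ref{lemma_der_of_dens}; I expect no real obstacle there.
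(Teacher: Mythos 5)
Your proof is correct but takes a genuinely different route from the paper. The paper works directly with the subordinated representation $\psi(x)=\int_0^\infty p(s,x,0)\left(1-\tfrac{1}{2s}\right)\mu_1(ds)$: it applies the two-sided estimate (\ref{sas_estimate}) on $p(s,x,0)$ (and, for part (ii), the derivative bound of Lemma \ref{lemma_der_of_dens} on $p$), splits the $s$-integral at $s=|x|^\alpha$, and bounds the two pieces via the dedicated technical Lemma \ref{tech_lemma_1}. You instead exploit the scaling of $q_t$ to write $\psi(x)=-\tfrac{2d}{\alpha}q_1(x)-\tfrac{2}{\alpha}\,x\cdot\nabla q_1(x)$, which cleanly reduces both parts to pointwise estimates on $q_1$ and its derivatives and avoids Lemma \ref{tech_lemma_1} entirely. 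Your approach is arguably more transparent and makes the comparison with $q_1$ in part (i) immediate; the paper's approach has the advantage of only needing the first-derivative bound of Lemma \ref{lemma_der_of_dens}, applied to $p$.

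One technical point worth tightening: in part (ii) your formula involves mixed derivatives $\partial_{x_j}\partial_{x_i}q_1$ for $i\neq j$, whereas Lemma \ref{lemma_der_of_dens} as stated only covers pure derivatives $\partial_{x_j}^k$ in a single coordinate direction. You would need either to check that Proposition 3.3 of \cite{bass_zqchen} indeed yields the same bound for general second-order multi-indices (which it should, but is not asserted in the paper's lemma), or to rearrange the computation to avoid mixed derivatives. This is a minor but genuine gap relative to what the paper provides you.
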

\begin{proof}(i.) First note that 
$$|\psi(x)|\leq c \int_0^\infty p(s,x,0)\, |1-\frac{1}{2s}|\, s^{-3/2}e^{-1/(4s)}ds\leq c\int_0^\infty  |1-\frac{1}{2s}|\, s^{-\frac{3}{2}-\frac{d}{\alpha}}e^{-1/(4s)}ds<\infty$$
by the estimate (\ref{sas_estimate}) on the density $p(s,x,0)$. Using the same estimate once again we obtain
	\begin{align*}
		|\psi(x)|&\leq c \int_0^{|x|^\alpha} \frac{s}{|x|^{d+\alpha}}| 1-\frac{1}{2s} |\, \mu_1(ds)+c\, \int_{|x|^\alpha}^\infty s^{-d/\alpha} | 1-\frac{1}{2s} |\, \mu_1(ds)\\
				&\leq  \frac{c}{|x|^{d+\alpha}} \int_0^{|x|^\alpha}| s-\frac{1}{2} |\, \mu_1(ds)+ \frac{c}{|x|^{d}}  \int_{|x|^\alpha}^\infty  | 1-\frac{1}{2s} |\, \mu_1(ds).
	\end{align*}
By Lemma \ref{tech_lemma_1},
	$$\disp \left| \psi(x)\right|\leq c_1\, \left(1 \wedge |x|^{-d-\alpha/2}\right).$$
The second inequality follows from the estimate (\ref{qt_estimate}) on $q_1(x)$.

(ii.) Similarly, using the bound on $\partial_{x_i} p(s,x,0)$ (Lemma \ref{lemma_der_of_dens}), we obtain
	$$|\partial_{x_i}\psi(x)|\leq c \int_0^\infty | \partial_{x_i}p(s,x,0)| |1-\frac{1}{2s}|\, \mu_1(ds)\leq c\int_0^\infty  |1-\frac{1}{2s}|\, s^{-\frac{3}{2}-\frac{d+1}{\alpha}}e^{-1/(4s)}ds<\infty$$
and
	\begin{align*}
		|\partial_{x_i}\psi(x)|&=\left| \int_0^\infty  \partial_{x_i}p(s,x,0) (1-\frac{1}{2s})\, \mu_1(ds)	\right|\\
				&\leq c \int_0^{|x|^\alpha} \frac{s}{|x|^{d+1+\alpha}}| 1-\frac{1}{2s} |\, \mu_1(ds)+c\, \int_{|x|^\alpha}^\infty s^{-(d+1)/\alpha} | 1-\frac{1}{2s} |\, \mu_1(ds)\\
				&\leq  \frac{c}{|x|^{d+1+\alpha}} \int_0^{|x|^\alpha}| s-\frac{1}{2} |\, \mu_1(ds)+ \frac{c}{|x|^{d+1}}  \int_{|x|^\alpha}^\infty  | 1-\frac{1}{2s} |\, \mu_1(ds).
	\end{align*}
When we use Lemma \ref{tech_lemma_1}, we obtain the desired result. 
\end{proof}

In the previous theorem, we stated a boundedness condition on the kernel of convolution operator by means of the action of the semi-group $Q_t$. In order for this condition to be more useful,  we want to state an application in some purely analytic language. We provide two conditions in Theorem \ref{main_thm}, under which the condition (\ref{assump_1})  of Theorem \ref{mult_1} holds and hence the result follows.

\begin{thm}\label{main_thm}
 Suppose $\alpha\in (1,2)$, $\kappa:\Rd\rightarrow \mathbb{R}$ is a function with the cancelation property (\ref{cancelation}) such that 
\begin{itemize}
	\item[i.] $\disp |\kappa(x)|\leq \frac{c}{|x|^d}\ind_{\{|x|\leq 1\}}+\frac{c}{|x|^{d-1+\alpha/2}}\ind_{\{|x|> 1\}}$,\\
	\item[ii.] $\disp |\nabla\kappa(x)|\leq \frac{c}{|x|^{d+1}}\ind_{\{|x|\leq 1\}}+\frac{c}{|x|^{d+\alpha/2}}\ind_{\{|x|> 1\}}$.
\end{itemize}
Suppose $T$ is a convolution operator with kernel $\kappa$. Then for $f\in \mathcal{C}_K^1$ and $p>1$ we have
$$\|Tf\|_p \leq c\, \|f\|_p.$$
\end{thm}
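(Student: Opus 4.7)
The plan is to verify the hypothesis (\ref{assump_1}) of Theorem \ref{mult_1} for some $\lambda>1$; once this is in place, the bound $\|Tf\|_p\leq c\|f\|_p$ follows at once from that theorem for every $p>1$. The natural choice is $\lambda=\lambda_0:=1+\alpha/(2d)$, which is strictly larger than $1$ and for which $K_t^{\lambda_0}$ is, by (\ref{qt_estimate}), comparable to the $\alpha/2$-stable density $q_t$. Since $Q_t\kappa = q_t*\kappa$, one has $\partial_t Q_t\kappa(x)=\Psi_t*\kappa(x)$ with $\Psi_t:=\partial_t q_t$. The scaling (\ref{scaling}) of the $\alpha/2$-stable density gives $\Psi_t(x)=t^{-1-2d/\alpha}\psi(x/t^{2/\alpha})$, where $\psi=(\partial_t q_t)|_{t=1}$ is precisely the function controlled in Lemma \ref{estimate_1}; moreover $\int\psi=0$, obtained by differentiating the identity $\int q_t\equiv 1$. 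A change of variables $y=t^{2/\alpha}z$ then reduces the target inequality to
\begin{equation*}
  \Bigl|\int_{\R^d}\psi(z)\,\kappa\bigl(x-t^{2/\alpha}z\bigr)\,dz\Bigr|\;\leq\;c\,K_t^{\lambda_0}(x),
\end{equation*}
with the integral understood as a principal value through the cancellation (\ref{cancelation}).

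To estimate this integral I would perform two nested case splits: on the position of $x$ relative to the dilation scale ($|x|\leq 2t^{2/\alpha}$ versus $|x|>2t^{2/\alpha}$, matching the two regimes of $K_t^{\lambda_0}$), and within each case on whether the argument $x-t^{2/\alpha}z$ lies in the regime $|y|\leq 1$ or $|y|>1$ where conditions (i)--(ii) on $\kappa$ switch form. The main tools are: the mean-zero property of $\psi$, which lets me rewrite the integrand as a difference $\psi(z)[\kappa(x-t^{2/\alpha}z)-\kappa(x-t^{2/\alpha}z_0)]$ for a convenient reference point $z_0$ and apply the gradient estimate (ii) through the mean value theorem; the pointwise decay $|\psi(z)|\leq c(1\wedge|z|^{-d-\alpha/2})$ and its gradient analogue from Lemma \ref{estimate_1}, which match the tail of $\kappa$; and the annular cancellation (\ref{cancelation}) of $\kappa$, used on dyadic shells around the origin of its argument to neutralize the singularity, paired with the $\nabla\psi$ estimate of Lemma \ref{estimate_1}(ii).

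The main obstacle is the far regime $|x|\gg t^{2/\alpha}$, in which the desired factor $K_t^{\lambda_0}(x)\asymp t/|x|^{d+\alpha/2}$ has to be extracted from slowly-decaying tails of both $\psi$ and $\kappa$; neither kernel is in $L^1$ at infinity, so the crude size estimate $\int|\psi||\kappa|$ overshoots. The two cancellations must be used in tandem: the mean-zero of $\psi$ to trade $\kappa$ for $\nabla\kappa$, which decays like $|y|^{-d-\alpha/2}$ by (ii), and the annular cancellation of $\kappa$ to absorb the piece where this trade is not available. This is where the assumption $\alpha>1$ is expected to enter, in ensuring the right decay exponent $d+\alpha/2$ on the right-hand side and the convergence of the boundary integrals produced by the dyadic splits. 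Once (\ref{assump_1}) has been verified with $\lambda=\lambda_0$, Theorem \ref{mult_1} supplies $\|Tf\|_p\leq c\|f\|_p$ for every $p>1$ and every $f\in\mathcal{C}_K^1$, completing the proof.
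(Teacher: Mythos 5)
Your overall strategy is correct and matches the paper: reduce to verifying the pointwise condition (\ref{assump_1}) for some $\lambda>1$, use the scaling $\partial_t q_t(x)=t^{-1-2d/\alpha}\psi(x/t^{2/\alpha})$ with $\psi=(\partial_t q_t)|_{t=1}$, exploit $\int\psi=0$, and combine the cancellation of $\kappa$ with the size and gradient estimates on $\psi$ from Lemma~\ref{estimate_1}. However, there is a genuine error in the choice of $\lambda$: you take $\lambda_0=1+\alpha/(2d)$, so that $\lambda_0 d=d+\alpha/2$ and $K_t^{\lambda_0}\asymp q_t$. That decay rate is not achievable, and the step where you trade $\kappa$ for $\nabla\kappa$ via the mean value theorem is exactly where it fails. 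Concretely, for $|x|\gg 1$ and $|y|<|x|/2$ the gradient hypothesis (ii) gives $|\kappa(x-y)-\kappa(x)|\lesssim |y|\,|x|^{-d-\alpha/2}$, and pairing this with $\psi$ requires $\int_{\Rd}|\psi(y)|\,|y|\,dy<\infty$; but $|\psi(y)|\asymp |y|^{-d-\alpha/2}$ at infinity, so that integral diverges for every $\alpha<2$. To make the integral converge one must sacrifice part of the factor $|y|$, writing $|y|\leq c\,|y|^{\beta}|x|^{1-\beta}$ for some $\beta<\alpha/2$ on the region $|y|<|x|/2$, which downgrades the decay to $|x|^{-(d+\alpha/2-1+\beta)}$. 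Hence the best attainable exponent is strictly below $d+\alpha/2$; the paper takes $\beta=1/2$ and obtains $\lambda=1+(\alpha-1)/(2d)$, and this is precisely where the hypothesis $\alpha>1$ is needed (to ensure $\lambda>1$ and $\beta=1/2<\alpha/2$). The same exponent $d-1+\alpha$ also appears as the natural limit of the estimates for the other pieces ($I_1$, $I_3$ in the paper's decomposition), confirming $\lambda_0 d=d+\alpha/2>d-1+\alpha$ is out of reach for $\alpha<2$.

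A secondary, more minor difference: the paper first splits $\kappa=\kappa_1+\kappa_2$ with $\kappa_1$ supported near the origin, disposes of $T_1$ by the classical Theorem~\ref{classic_case}, and verifies (\ref{assump_1}) only for the tail $\kappa_2$, so that in the regime $|x|\lesssim t^{2/\alpha}$ the bound is trivial ($\kappa_2$ is bounded). Your proposal attempts to treat the full $\kappa$ at once using a principal-value interpretation; this can be made to work by using the cancellation near the origin together with the local Lipschitz bound on $\psi$, but the paper's split is cleaner and avoids that argument entirely. You should adopt both changes: replace $\lambda_0$ with $\lambda=1+(\alpha-1)/(2d)$ (or anything in $(1,\,1+(\alpha-1)/d)$), and either split off the near-origin singularity or supply the additional principal-value estimate.
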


\begin{proof}
	First let $\phi$ be a smooth function on $\mathbb{R}$ such that $\phi(r)=1$ whenever $|r|\leq 1$ and  $\phi(r)=0$ whenever $|r|>2$. Now let $$\kappa_1(x)=\kappa(x) \phi(|x|^2),\quad \kappa_2(x)=\kappa(x) (1-\phi(|x|^2))$$ and $$T_1f=f*\kappa_1, \quad  T_2f=f*\kappa_2.$$ Then $Tf=T_1f+T_2f$. By the classical case (Theorem \ref{classic_case}), $\|T_1f\|_p\leq c\, \|f\|_p$. So without loss of generality we may assume $T=T_2$ and $\kappa=\kappa_2$ and ignore the indices. As before, set $\disp\psi(x)=\left(\partial_tq_t(x)\right)_{t=1}$. By scaling, we have $\partial_tq_t(x)=t^{-1-2d/\alpha}\psi(x/t^{2/\alpha})$. So by Theorem \ref{mult_1} and scaling, it is enough to show that 
	\begin{align*}
		\left|  \left( \partial_tQ_t\kappa(x) \right)_{t=1} \right| \leq \frac{c}{\left(1+|x|\right)^{\lambda d}}
	\end{align*}
	for some $\lambda>1$. Here we will take $\lambda=1+(\alpha-1)/(2d)$.\\
	
	First assume $|x|\leq 1$. Then
	\begin{align}\label{main_int}
		 \left( \partial_tQ_t\kappa(x) \right)_{t=1}&=\int_{|y|>1} \kappa(y) \psi(x-y)\, dy
	\end{align}
	and we have by Lemma \ref{estimate_1} (i),
	\begin{align*}
		\left| \left( \partial_tQ_t\kappa(x) \right)_{t=1}\right|&\leq \int_{|y|>1}|\kappa(y)|\,|\psi(x-y)|dy \\
												&\leq c  \int_{|y|>1}|\kappa(y)| \, q_1(x-y)dy. \\
	\end{align*}
	Then the assumption (i) on $\kappa(\cdot)$ in the hypothesis gives us that 
	$$|\kappa(y)|\leq \frac{c}{|y|^{d-1+(\alpha/2)}}\leq c$$ whenever $|y|>1.$ Thus
	\begin{align*}
		 \int_{|y|>1}|\kappa(y)| \, q_1(x-y)dy &\leq c \int_{|y|>1}q_1(x-y) dy\leq c \int_{\Rd} q_1(x-y)dy= c, \\
	\end{align*}
	since $q_1$ is a probability kernel.
	Hence
	\begin{align}\label{leq1}
		\left| \left( \partial_tQ_t\kappa(x) \right)_{t=1}\right|&\leq c	\leq \frac{c}{\left(1+|x|\right)^{\lambda d}}.
	\end{align}
	Now assume $|x|>1$. Consider three subsets of $\Rd$: $\mathcal{D}_1=\{y\in\Rd:|y|<|x|/2\}$,  $\mathcal{D}_2=\{y\in\Rd:|y-x|<|x|/2\}$ and  $\mathcal{D}_3=\Rd-(\mathcal{D}_1\cup \mathcal{D}_2)$. Now split the integral (\ref{main_int}) into three integrals with respect to these subsets.
	\begin{align}
		\int_{|y|>1} \kappa(y) \psi(x-y)\, dy&=\int_{\mathcal{D}_1}+\int_{\mathcal{D}_2}+\int_{\mathcal{D}_3}:=I_1+I_2+I_3.\nonumber
	\end{align}
	Since $\kappa$ satisfies the cancelation condition (\ref{cancelation}), 
	\begin{align*}
		|I_1|&=\left| \int_{\mathcal{D}_1}  \kappa(y)\,(\psi(x-y)-\psi(x)) dy \right|\\
			&\leq \sup_{|z-x|<|x|/2} \left|\nabla\psi(z)\right| \int_{\mathcal{D}_1}  \frac{c}{|y|^{d-1+\alpha/2}}\,|y| dy \\
			&\leq c \, |x|^{2-\alpha/2} \sup_{|z-x|<|x|/2} \left| \nabla\psi(z)\right| .
	\end{align*}
	By Lemma \ref{estimate_1}, the gradient above is bounded by $c |x|^{-d-1-\alpha/2}$. This gives the inequality
	\begin{align*}
		|I_1|\leq \frac{c}{|x|^{d-1+\alpha}}\leq \frac{c}{|x|^{\lambda d}}.
	\end{align*}
	For $I_3$, we note that $c|y|\leq |x-y| \leq c'|y|$ whenever $y\in \mathcal{D}_3$. So using Lemma \ref{estimate_1} again,
	\begin{align*}
		|I_3|\leq c \, \int_{\mathcal{D}_3} \frac{1}{|x-y|^{d+\alpha/2}} \frac{1}{|y|^{d-1+\alpha/2}}dy\leq c  \, \int_{|y|\geq |x|/2}|y|^{-2d+1-\alpha}dy\leq \frac{c}{|x|^{\lambda d}}.
	\end{align*}
	For the last part, namely $I_2$, we use our assumption on $\nabla\kappa$. By a change of variables, we have
	\begin{align*}
		|I_2|&=\left|\int_{|y-x|<|x|/2}\psi(x-y)\, \kappa(y)\, dy\right|\\
			&=\left|\int_{|y|<|x|/2}\psi(y)\, \kappa(x-y)\, dy\right|\\
			& \leq \int_{|y|<|x|/2}|\psi(y)|\,| \kappa(x-y)-\kappa(x)|\, dy+|\kappa(x)| \left|\int_{|y|<|x|/2}\psi(y)\,  dy\right|.
	\end{align*}
	First observe that 
	\begin{align*}
		\int_{\Rd} \psi(y)dy=\int_0^\infty\int_{\Rd} p(s,y,0)dy\, (1-\frac{1}{2s})\mu_1(ds)=0.
	\end{align*}
	Hence
	\begin{align*}
		\left| \int_{|y|<|x|/2} \psi(y) dy \right|=\left| \int_{|y|\geq |x|/2} \psi(y) dy \right|\leq c  \int_{|y| \geq |x|/2} |y|^{-d-\alpha/2} dy \leq \frac{c}{|x|^{\alpha/2}}.
	\end{align*}
	We also note that if $|y|<|x|/2$ then using the upper-bound on $|\nabla\kappa|$, we obtain
	\begin{align*}
		\left| \kappa(x-y)-\kappa(x) \right| \leq c\, \frac{|y|}{|x|^{d+\alpha/2}}\leq c\,  \frac{|y|^{1/2}}{|x|^{d+(\alpha-1)/2}} = c\,  \frac{|y|^{1/2}}{|x|^{\lambda d}}.  
	\end{align*}
	Then
	\begin{align}\label{est_I_2}
		|I_2|&\leq \frac{c}{|x|^{\lambda d}}\int_{\Rd} \left| \psi(y)\right|\cdot |y|^{1/2}dy+\frac{c}{|x|^{d-1+\alpha}}.
	\end{align}
	If we show that the integral in (\ref{est_I_2}) is bounded by a constant, then we have
	\begin{align*}
		|I_2|&\leq \frac{c}{|x|^{\lambda d}}.
	\end{align*}
	To show the boundedness of the integral in (\ref{est_I_2}), we consider the cases $|y|<1$ and $|y|\geq 1$. Note that
	\begin{align*}
		\int_{\Rd} \left| \psi(y)\right|\cdot |y|^{1/2}dy\leq \int_{|y|<1} \left| \psi(y)\right|dy+\int_{|y|\geq 1} \frac{c}{|y|^{d+\alpha/2}}\cdot |y|^{1/2}dy
	\end{align*}
	by Lemma \ref{estimate_1}. The second term is convergent since $\alpha>1$. The first term is bounded by 
	\begin{align*}
		\int_{|y|< 1 }\int_0^\infty p(s,y,0) |1-\frac{1}{2s}|\, \mu_1(ds)\,dy&\leq \int_0^\infty \int_{\Rd} p(s,y,0)\, dy |1-\frac{1}{2s}|\, \mu_1(ds)\\
				&\leq \int_0^{1/2} \frac{1}{2s} \, \mu_1(ds)+\int_{1/2}^\infty \mu_1(ds)\\
				&\leq c.
	\end{align*}
	Hence 
	\begin{align}\label{geq1}
		\left| \left(\partial Q_t\kappa(x)\right)_{t=1}\right|\leq |I_1|+|I_2|+|I_3|\leq \frac{c}{|x|^{\lambda d}}\leq\frac{c}{(1+|x|)^{\lambda d}} 
	\end{align}
	whenever $|x|>1$.
	Finally, inequalities (\ref{leq1}) and (\ref{geq1}) and Theorem \ref{mult_1} imply that 
	$$\|Tf\|_p\leq c\, \|f\|_p$$
	for $p>1$ and $f\in\mathcal{C}_K^1$, which finishes the proof. 

\end{proof}

\subsection*{Acknowledgements}
This research project is supported by the BAP grant, numbered 14B103, at the I\c{s}\i k University, Istanbul, Turkey.
We also would like to thank to our anonymous referee for his/her comments.

\end{document}